\documentclass[preprint, 3p, 11pt, authoryear]{elsarticle}
\linespread{1.3}



\usepackage{graphicx}
\usepackage{amssymb}

\usepackage{lineno}
\usepackage[utf8]{inputenc}
\usepackage{algorithm}
\usepackage{algorithmicx}
\usepackage{algpseudocode}
\usepackage{amsmath}
\usepackage{amsthm}
\usepackage{dsfont}
\usepackage{enumitem}
\usepackage{etaremune}
\usepackage{natbib}
\usepackage{listings}
\usepackage{xcolor}
\usepackage{bm}
\usepackage{booktabs}
\usepackage{nicefrac}
\usepackage{graphicx}
\usepackage{multirow}
\usepackage{color, colortbl}
\usepackage{xcolor}
\definecolor{Gray}{gray}{0.9}
\setlength\parindent{0pt}
\usepackage{url}
\usepackage{caption}
\usepackage{subcaption}
\usepackage{enumitem}
\usepackage{microtype} 
\usepackage{multicol}
\usepackage{wrapfig}
\usepackage{framed} 
\usepackage{nomencl}
\usepackage{todonotes}

\makenomenclature
\renewcommand*\nompreamble{\begin{multicols}{2}}
\renewcommand*\nompostamble{\end{multicols}}

\sloppy

\newtheoremstyle{break}
 {\topsep}{\topsep}%
 {\itshape}{}%
 {\bfseries}{}%
 {\newline}{}%
\theoremstyle{break}

\newcommand{\beginsupplement}{%
 \setcounter{table}{0}
 \renewcommand{\thetable}{S\arabic{table}}%
 \setcounter{figure}{0}
 \renewcommand{\thefigure}{S\arabic{figure}}%
 }

\newcommand{\infolevel}[1]{\textbf{L}_{#1}}




\journal{European Journal of Operations Research}

\begin{document}

\newtheorem{definition}{Definition}
\newtheorem{theorem}{Theorem}
\begin{frontmatter}


\title{Policies for the Dynamic Traveling Maintainer Problem with Alerts}

\author[ieis,tue]{Paulo da Costa\fnref{foot}}
\ead{p.r.d.oliveira.da.costa@tue.nl}
\author[sor,tue]{Peter Verleijsdonk\fnref{foot}\corref{corauthor}}
\ead{p.verleijsdonk@tue.nl}
\author[ieis,tue]{Simon Voorberg\fnref{foot}}
\ead{s.voorberg@tue.nl}
\author[ieis,tue]{Alp Akcay}
\ead{a.e.akcay@tue.nl}
\author[sor,tue]{Stella Kapodistria}
\ead{s.kapodistria@tue.nl}
\author[ieis,tue]{Willem van Jaarsveld}
\ead{w.l.v.jaarsveld@tue.nl}
\author[ieis,tue]{Yingqian Zhang}
\ead{yqzhang@tue.nl}

\address[ieis]{Department of Industrial Engineering and Innovation Sciences}

\address[sor]{Department of Mathematics and Computer Science}

\address[tue]{Eindhoven University of Technology, P.O. Box 513, Eindhoven 5600 MB, the Netherlands}
\date{\today}

\fntext[foot]{These authors contributed equally.}
\cortext[corauthor]{Corresponding author}

\begin{abstract}
Downtime of industrial assets such as wind turbines and medical imaging devices comes at a sharp cost. To avoid such downtime costs, companies seek to initiate maintenance just before failure. Unfortunately, this is challenging for the following two reasons: On the one hand, because asset failures are notoriously difficult to predict, even in the presence of real-time monitoring devices which signal early degradation. On the other hand, because the available resources to serve a network of geographically dispersed assets are typically limited. In this paper, we propose a novel dynamic traveling maintainer problem with alerts model that incorporates these two challenges and we provide three solution approaches on how to dispatch the limited resources. Namely, we propose: (i) Greedy heuristic approaches that rank assets on urgency, proximity and economic risk; (ii) A novel traveling maintainer heuristic approach that optimizes short-term costs; and (iii) A deep reinforcement learning (DRL) approach that optimizes long-term costs. Each approach has different requirements concerning the available alert information. Experiments with small asset networks show that all methods can approximate the optimal policy when given access to complete condition information. For larger networks, the proposed methods yield competitive policies, with DRL consistently achieving the lowest costs.
\end{abstract}

\begin{keyword}
Maintenance \sep Degradation Process \sep Decision Process \sep Traveling Maintainer Problem \sep Deep Reinforcement Learning


\end{keyword}
\end{frontmatter}

\section{Introduction}\label{sec: introduction}
Industrial assets such as wind turbines, trains, medical imaging equipment, aircraft, and wafer steppers are required to experience minimal downtime. Ideally, assets are maintained just before failure so as to ensure the highest availability at minimum cost. However, the failure mechanism of such assets is \emph{a priori} typically unknown.

To keep assets functioning as intended, various maintenance policies exist: (i) Assets are maintained at scheduled times, i.e., \textit{time-based maintenance} (TBM) planning; or (ii) Assets are regularly inspected to evaluate their degradation, i.e., \textit{condition-based maintenance} (CBM). CBM is increasingly prevalent following the rise of smart monitoring devices and predictive models. Indeed, predictive models can use real-time sensor readings to predict the future failure times of assets. These predictions are typically communicated to the central operation in the form of \textit{alerts}, which serve as an early indication of potential failures \citep{TOPAN2020401}. Numerous models exist that optimize maintenance timing for individual assets. However, the ability to maintain individual assets is typically subject to the availability of maintenance resources that are shared over a network of geographically dispersed assets. In such networks, predictive models yield lists of alerts and predicted failure moments that need to be prioritized. Since alerts often carry uncertain information about the failure time due to misreadings, sensor malfunction or prediction errors, devising effective policies for these cases entails considering the uncertainty of alerts in addition to maintenance costs and travel times.

Examples of asset networks that regularly require maintenance during their long operational life include offshore wind-turbine parks. Such parks can contain over $100$ turbines and are often built in a grid-like structure, where pairwise distances between turbines are equal \citep{breton2009status}. To maintain wind turbines, maintenance engineers circulate on a vessel carrying equipment for repairs. In this case, the use of remote monitoring devices and failure predictions can represent a significant improvement over TBM policies. For example, a vibration sensor connected to the gearbox of the turbine can be used to detect signs of degradation \citep{Kenbeek2019}. Similar to wind-turbine parks, manufacturers of medical equipment (such as scanners) have to maintain their assets within a network of hospitals. Medical equipment is equipped with an array of sensors that track medical procedures, but this data can also be used to signal degradation, which in turn may trigger engineer dispatching. 

We conceptualize (what we shall refer to from now onward as) the dynamic traveling maintainer problem with alerts (DTMPA) and study the optimization of maintenance decisions in the presence of alerts, under the combination of challenges: (i) limited repair capacity and travel times; and (ii) asset degradation that is stochastic and possibly unknown. In the DTMPA, an engineer travels in a network, where each location contains a single asset. The degradation of the individual assets is modeled through: (i) A finite number of degradation states; and (ii) A set of three phases (healthy, alert and failed). The latter case corresponds to the case of censored/partial information of the degradation process. To obtain this, the degradation states are clustered in three phases and an alert is emitted when there is a transition from the \emph{healthy} phase to the \emph{alert} phase. The \textit{failed} phase corresponds to the state in which the asset has degraded to the extent that it is malfunctioning/down. 

The DTMPA is a discrete time sequential decision process.
Each unit of time, the engineer can either travel to a location, wait or start a maintenance job at its current location. Preventive maintenance restores an asset from the alert phase to the healthy phase, while corrective maintenance restores an asset from the failed phase to the healthy phase. During maintenance, the asset is down. The objective is to minimize the total expected discounted cost, which consists of the costs for (preventive and corrective) maintenance and for the machine downtime. 

When a machine transitions into the alert phase, it is valuable to use all available information to predict its residual lifetime. However, the available information may vary. Accordingly, we distinguish various levels of real-time information. These \emph{information levels} range from no further information to full information (being able to observe the degradation state and knowing the remaining lifetime distribution). Using the various information levels, the proposed DTMPA framework can be tailored to a range of use cases.

We develop and compare three solution approaches for the DTMPA. First, we introduce a class of greedy, reactive heuristics that rank actions/assets based on maintenance costs, travel times and predicted failure times. Second, we propose a heuristic that leverages the alert information to construct a deterministic approximation of the problem similar to the traveling maintainer problem (TMP) \citep{camci2014travelling}. Third, we propose an algorithm using distributional deep reinforcement learning  \citep{bellemare2017distributional} trained in a simulated environment optimizing total expected discounted maintenance costs. By using these solutions to go from alerts to decisions, we bridge the gap from predictive analytics to business prescriptive analytics for various levels of real-time degradation information.

The main contributions of the paper are detailed as follows:
\begin{itemize}
 \item We conceptualize the dynamic traveling maintainer problem with alerts (DTMPA) on dispatching an engineer in an asset network.
 \item We introduce on the DTMPA various information levels ranking the quality of the information retrieved from alerts. 
 \item We propose heuristics yielding competitive policies for their respective information level compared to the optimal policy found under complete information on the degradation process.
 \item We illustrate that deep reinforcement learning (DRL) outperforms other proposed heuristics, requiring only the minimum level of information and observations from the DTMPA.
\end{itemize}

The remainder of the paper is organized as follows. \mbox{Section \ref{sec: literature}} reviews related literature. \mbox{Sections \ref{sec: model} and \ref{sec: framework}} introduce in detail the DTMPA framework. \mbox{Section \ref{sec: solutions}} discusses the various solutions methods. In \mbox{Section \ref{sec: numerics}}, the setup of the numerical experiments is presented. \mbox{Section \ref{sec: results}} discusses the results and the corresponding managerial insights. We conclude in \mbox{Section \ref{sec: conclusion}} and provide directions for further research.
\section{Literature review}\label{sec: literature}
We review three streams of relevant literature: maintenance optimization, traveling maintainer problems and lastly deep reinforcement learning for maintenance optimization.

\paragraph{Maintenance optimization}
\emph{Maintenance optimization} models can be single-asset or multi-asset \citep{OLDEKEIZER2017405,DEJONGE2020805}. Multi-asset models extend the work of single-asset models by considering joint maintenance policies for assets that exhibit any of four types of dependencies: economic, structural, stochastic and resource dependency \citep{OLDEKEIZER2017405}. Asset degradation is typically modeled with a stochastic process defined on a discrete finite state-space, e.g., a Markov chain. 
Models with three degradation states can also be viewed as delay-time models: The second state may represent a \emph{defect} phase, which can be determined by inspection \citep{WANG2012165}. One can improve scheduled inspections with information acquired via sensors. These sensors sometimes measure asset degradation directly, but typically they measure asset health indirectly \citep{VANSTADEN2020}, for instance in the form of alerts \citep{DEJONGE201693, AKCAY2021}. In a multi-asset scenario, practical problems include complex dependencies induced by the geographical layout, giving rise to the so-called traveling maintainer problems.

\paragraph{Traveling maintainer problems}
The objective of the classical \emph{traveling maintainer problem} (TMP) is to find a path through a network of assets which minimizes the sum of the time needed to reach each asset. The TMP is obtained from the traveling salesman problem (TSP) as a mean-flow variant, and is thus NP-complete \citep{afrati1986complexity}. One can also assign a hard deadline to each asset, such as a bound on the response time, which further increases complexity. \cite{Tulabandhula2011} apply machine learning to estimate time-dependent failure probabilities at asset locations and propose two TMP objectives; learn the failure probabilities and solve the TMP sequentially or simultaneously.
 \cite{camci2014travelling} studies the TMP with the objective of minimizing the sum of functions of response times to assets. This variant, which integrates real-time CBM prognostics with the TSP by scheduling maintenance using predicted failure information, was later extended to include travel times \citep{camci2015maintenance}. The dynamic TMP (DTMP) considers service demands that arrive uniformly in some region according to a Poisson process \citep{bertsimas1989dynamic}. These service demands must be fulfilled by a single maintainer, such that the average response time is minimal. \cite{drent2020dynamic} formulate a variant of the DTMP as a Markov decision process (MDP) and propose dynamic heuristics for both the dispatching and repositioning of a repair crew using real-time condition information. \cite{HAVINGA2020107497} investigate a cyclic TMP, formulated as an MDP, combined with condition-based preventive maintenance. The proposed control-limit heuristic is compared with the optimal policy and with corrective maintenance heuristics. A challenge with MDP-based solution methods is that large asset networks induce unwieldy state spaces, rendering any exact method computationally intractable. Heuristics and function approximation methods such as deep reinforcement learning may be used to overcome these limitations. 

\paragraph{Deep reinforcement learning for maintenance optimization}
Recent DRL approaches to maintenance problems include learning opportunistic maintenance strategies on parallel machines, resulting in downtime and cost reduction compared to reactive and time-based strategies \citep{kuhnle2019reinforcement}. \cite{compare2018reinforcement} employ DRL to replacement-part management subject to stochastic failures. \cite{andriotis2020deep} and \cite{liu2020dynamic} consider inspection and maintenance policies to minimize long-term risk and costs for deteriorating assets. Similar to the DTMPA setting, this problem includes multiple challenges, such as high state cardinality, partial-observability, long-term planning, environmental uncertainties and constraints. \cite{bhattacharya2020reinforcement} consider policy rollouts and value function approximation to learn policies for a partially observable Markov decision process (POMDP) modeling a robot visiting adjacent machines. However, unlike the DTMPA case, these works do not consider maintenance optimization in an asset network. Note that DRL can potentially outperform hand-designed heuristic policies by learning to use the relevant state information from data requiring little human intervention.

\paragraph{Contribution to the literature}
In this work we combine the multiple research streams in maintenance optimization into a new framework for sequential decision-making of maintenance on asset networks under uncertainty. We consider a problem that is resource-dependent, as a single engineer maintains the entire asset network. As in previous works, we assume a finite state space per asset and allow for non-Markovian transition times between subsequent states. Moreover, we introduce three observable degradation states per asset, representing a censored observation of the true, hidden state. We assume that alerts received from sensors are related to asset degradation and contain imperfect residual lifetime predictions from black-box prediction models. 

We adopt the objective to minimize the total expected discounted cost incurred due to downtimes of the assets and (unnecessary) maintenance actions. Instead of a hard deadline, we assume that the asset-dependent costs increase if the decision-maker does not perform preventive maintenance before the deadline triggered by the asset's failure mechanism. We develop greedy heuristics based on traditional rankings of alerts and a novel traveling maintainer heuristic based on optimizing the visitation order of a constructed TMP instance considering the urgency of the alerts and the near-future costs. Given the successes of DRL in maintenance optimization, we propose a DRL algorithm based on distributional RL \citep{bellemare2017distributional}, i.e., a class of methods that approximates the distribution of long-term costs. We learn policies that consider both maintenance and traveling decisions in a stochastic environment where observations are censored and alerts act as early indicators of failures.
\section{Dynamic traveling maintainer problem with alerts}\label{sec: model}

The DTMPA is a discrete-time model that considers a single decision-maker (maintenance engineer) and a set of assets (machines), denoted by $\mathcal{M} = \{1,\ldots, M\}$, each positioned at a unique location in the network. Each asset $m \in \mathcal{M} $ degrades randomly over time and requires maintenance to prevent and resolve failures. In general, the \emph{underlying} degradation state of the asset cannot be observed directly, and the decision-maker must rely on \emph{alerts} that indicate early signs of degradation. These alerts will be used to characterize the healthy, alerted and failed phases of assets (cf. Section \ref{subsec: observations}). Each period, the decision-maker selects an action $u$: Possible actions include traveling to another location, idling/continuing or initiating maintenance at the current location. The goal is to minimize the total expected discounted cost (cf. \mbox{Section \ref{subsec: objective}}). Preventive maintenance (PM) can be carried out before a failure occurs and it restores the state of the asset to as-good-as-new. Corrective maintenance (CM) can be carried out after a failure has occurred and it also restores the state to as-good-as-new. Typically, a higher cost is paid for CM compared to PM. Maintenance takes time during which the machine is down: Failed/down machines incur downtime costs. Traveling in the network takes time proportional to the distance between machine locations. Let $\theta_{ij}\in\mathbb{N}$ denote the travel time between assets $i,j \in \mathcal{M}$. We further assume that each asset $m \in \mathcal{M}$ degrades independently according to the \textit{underlying} degradation model: In each period $t\in \mathbb{N}_0$, the asset's condition is denoted by state $x_m(t)\in \mathcal{N}_m$. Here, $\mathcal{N}_m =\{1, \ldots, |\mathcal{N}_m|\}$, where state $x^\textbf{\textrm{h}}_m=1$ denotes an as-good-as-new asset and state $x^\textbf{\textrm{f}}_m=|\mathcal{N}_m|$ denotes a failed asset. Without intervention by the decision-maker, the next state is a ``worse'' state, i.e., $x_m(t+1) \geq x_m(t)$. To transition from state $x_m\in \mathcal{N}_m	\setminus\{x^\textbf{\textrm{f}}_m\}$ to state $x_m+1$ takes a random (positive, integer) amount of time, say $T^{x_m}_m$. When asset $m$ reaches the failed state $x^\textbf{\textrm{f}}_m$, it resides in this state until a corrective maintenance action is completed.

\subsection{Observations}\label{subsec: observations}

Without intervention by the decision-maker, the random evolution of the degradation process evolves as follows: Initially, the state is set to $x^\textbf{\textrm{h}}_m$ (as-good-as-new). After a random amount of time (required to pass through states $x^\textbf{\textrm{h}}_m$ to $x^\textbf{\textrm{a}}_m$), say $T^\textbf{\textrm{a}}_m$, an alert is issued.
After another independent random amount of time (required to pass through states $x^\textbf{\textrm{a}}_m$ to $x^\textbf{\textrm{f}}_m$), say $T^\textbf{\textrm{f}}_m$ referred to as the \emph{residual lifetime}, the asset fails.

\begin{wrapfigure}{r}{0.5\textwidth}
\centering 
\includegraphics[width=1.0\linewidth]{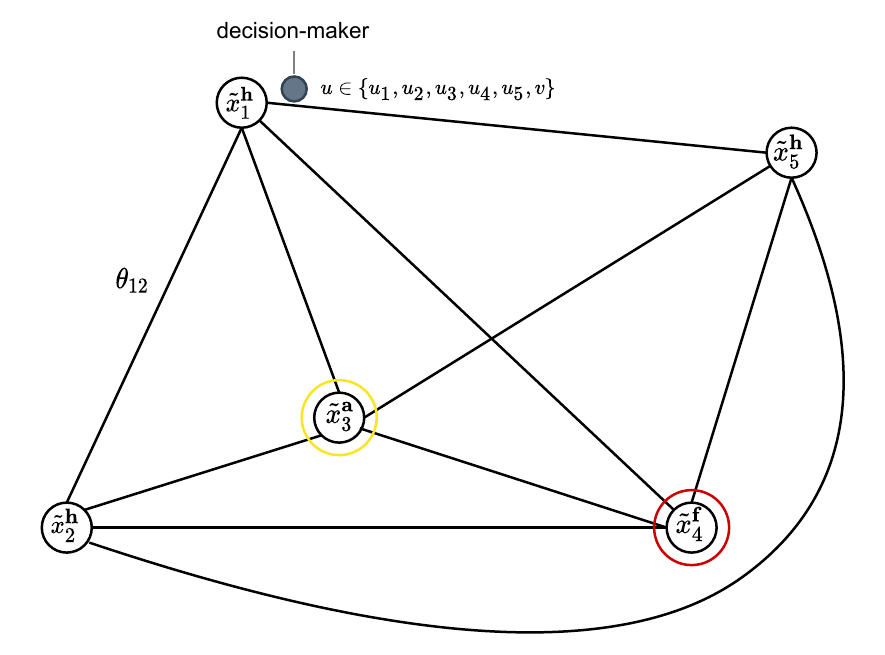}
 \caption{Visualization of the DTMPA model for an asset network of $M=5$ machines, plus the maintenance engineer. Based on observations, machines may be labeled as healthy, alerted (yellow) or failed (red). At discrete moments in time, an action $u$ is selected; possible actions include idling/ continue ($u_1$), traveling to another location ($u_2,u_3,u_4,u_5$) and initiating maintenance at the current location ($v$). }
 \label{fig:network}
\end{wrapfigure}

The decision-maker typically cannot directly observe the evolution of the degradation state $x_m\in \mathcal{N}_m$ and can only observe the three phases: (healthy, alert and failed) captured in the set of observable phases, say $\mathcal{X}_m = \{\tilde{x}^\textbf{\textrm{h}}_m, \tilde{x}^\textbf{\textrm{a}}_m, \tilde{x}^\textbf{\textrm{f}}_m\}$. The degradation states are mapped to these three phases as follows: If the degradation is in any of the states $\{x^\textbf{\textrm{h}}_m, \ldots, x^\textbf{\textrm{a}}_m-1\}$, then the phase is healthy $(\tilde{x}^\textbf{\textrm{h}}_m)$. If the degradation is in any of the states $\{ x_m^\textbf{\textrm{a}},\ldots, x^\textbf{\textrm{f}}_m-1\}$, then the phase is alert $(\tilde{x}^\textbf{\textrm{a}}_m)$. Lastly, the failed state corresponds to the failed phase $(\tilde{x}^\textbf{\textrm{f}}_m)$. 
 Based on the observable transitions alone, the degradation model can be viewed as a delay time model. See Figure \ref{fig:network} for a visualization of an asset network.

\subsection{Information levels}\label{subsec: loi}
The decision-maker receives information through alerts, which enables a categorization of machines as healthy, alerted, or failed (cf. Section~\ref{subsec: observations}). When using this information for planning purposes, some approaches may need more information regarding the underlying degradation process than others. For example, information regarding the distribution of the residual lifetime $T^\textbf{\textrm{f}}_m$ may enable more advanced planning approaches to better assess the risk of delaying maintenance on an alerted machine. We formalize this via four \textit{levels of information}:
\begin{enumerate}[label=($\textbf{L}_\arabic*$)]
\setcounter{enumi}{-1}
 \item The decision-maker acts on phase observations (cf. Section~\ref{subsec: observations}) and does not have any distributional information about the time until the alert $T^\textbf{\textrm{a}}_m$ or the residual lifetime $T^\textbf{\textrm{f}}_m$. \label{L0}

 \item The decision-maker acts on phase observations and has access to partial information (e.g., the expectation and the standard deviation) regarding the distributions $T^\textbf{\textrm{a}}_m$ and $T^\textbf{\textrm{f}}_m$.\label{L1}
 
 \item The decision-maker acts on phase observations and has full information about the underlying model (the distribution of $T^\textbf{\textrm{a}}_m$ and $T^\textbf{\textrm{f}}_{m}$ are known). \label{L2}
 
 \item The decision-maker has full state information about the underlying model and observes all state transitions (at any point in time the exact state $x_m\in \mathcal{N}_m$ is known). \label{L3}
 \end{enumerate}
Given the information level, the objective is to produce a policy that minimizes the total expected discounted cost of a given asset network. The next section formalizes the DTMPA as an MDP.

\section{Modeling framework}\label{sec: framework}
This section contains a formal description of the sequential decision-making DTMPA model framework. We introduce the underlying network states, actions, transitions, costs, observed information and the optimization objective. 

\subsection{Underlying/hidden network states}\label{subsubsec: hiddenstates}
The \textit{underlying/hidden network state} $h\in \mathcal{H}$ represents the state of all machines in the network as well as the status of the engineer. Each network state is a vector $h = (x_1, \ldots, x_M, \ell, \iota, \delta, \hat{t}_1, \ldots, \hat{t}_M )\in \mathcal{H}$. Here, $x_m\in\mathcal{N}_m$, $m\in\mathcal{M}$, denotes the degradation state of the $m$-th asset; $\ell\in\mathcal{M}$ denotes the location of the decision-maker; $\iota \in I = \{0,1\}$ indicates whether the engineer is currently performing maintenance (as opposed to traveling or waiting); $\delta\in\Delta \subset \mathbb{N}$ denotes the remaining time units until the decision-maker becomes available again ($\delta = 0$ indicates that the decision-maker is idle); $\hat{t}_m$, $m\in\mathcal{M}$, denotes the \textit{elapsed time}, i.e., the number of periods spent in the current state $x_m$. The inclusion of the elapsed times in the state space description $(h)$ ensures that the Markovian property is satisfied. Hence, $\mathcal{H}= \mathcal{N}_1 \times \ldots \times \mathcal{N}_M \times \mathcal{M} \times I \times \Delta \times \mathbb{N}^M$.

\subsection{Actions}\label{subsubsec: actions}
At every time instance, given $h= (x_1, \ldots, x_M, \ell, \iota, \delta, \hat{t}_1, \ldots, \hat{t}_M )\in \mathcal{H}$, the decision-maker can either: (i) Choose to start traveling immediately to another location, say $u_m$, $m\in\mathcal{M}\setminus\{\ell\}$; (ii) Choose to start a maintenance action at its current location, say $v$; or (iii) Continue with the current activity, including to remain idle, say $u_\ell$, with $\ell$ denoting the current location. Note that when $\delta > 0$, the engineer is busy either performing maintenance or traveling and must therefore carry out action $u_\ell$, while if $\delta=0$, then the actions can be chosen from the set $ \{u_m\}_{m\in \mathcal{M}\setminus\{\ell\}} \cup \{v\}$. The \textit{state-dependent action set} thus becomes
$$\mathcal{U}(h) = \begin{cases}
\{u_{m}\}_{m=1}^M \cup \{v\} & \textrm{if } \delta = 0, \,\\
\{u_{\ell}\} & \textrm{if } \delta > 0.
\end{cases}$$
 
\subsection{Transitions}\label{subsubsec: transitions}
The one-step state transitions $h \rightarrow h'$ are first determined by the chosen action, say $u\in\mathcal{U}(h)$, and then by the random evolution of the degradation processes. To this, $h \rightarrow h'$ is decomposed into $h \overset{u}{\rightarrow} h^u$ and to $h^u \overset{t \rightarrow t+1}\longrightarrow h'$.

For $h= (x_1, \ldots, x_M, \ell, \iota, \delta, \hat{t}_1, \ldots, \hat{t}_M ) \overset{u}{\rightarrow} h^u $,
\begin{equation*}
 h^u = \begin{cases} 
 (x_1, \ldots,\quad\quad\quad\quad\quad\quad\quad\quad x _M, \ell,\;\, \iota,\; \delta,\,\quad \hat{t}^u_1, \ldots,\quad\quad\quad\quad\quad\quad\quad \hat{t}^u_M ) & \textrm{if } u = u_m \land m = \ell, \\
 (x_1, \ldots,\quad\quad\quad\quad\quad\quad\quad\quad x_M, m, 0, \theta_{\ell m}, \hat{t}^u_1, \ldots,\quad\quad\quad\quad\quad\quad\quad \hat{t}^u_M ) & \textrm{if } u = u_m \land m \neq \ell, \\
 (x_1, \ldots, x_{\ell-1}, x^\textbf{\textrm{f}}_\ell, x_{\ell+1}, \ldots, x_M, \ell,\;\, 1,\, t^\textrm{PM}_\ell,\, \hat{t}^u_1, \ldots, \hat{t}^u_{\ell-1}, 0, \hat{t}^u_{\ell+1}, \ldots, \hat{t}^u_M ) & \textrm{if } u = v \land x_\ell \neq x^\textbf{\textrm{f}}_\ell, \\
 (x_1, \ldots,\quad\quad\quad\quad\quad\quad\quad\quad x_M, \ell,\;\, 1,\, t^\textrm{CM}_\ell,\, \hat{t}^u_1, \ldots,\quad\quad\quad\quad\quad\quad\quad \hat{t}^u_M ) & \textrm{if } u = v \land x_\ell = x^\textbf{\textrm{f}}_\ell.
 \end{cases}
\end{equation*}

In the first case, the decision-maker stays at the current location $m=\ell$ and carries on with the ongoing action, including to remain idle, $(u_\ell)$. In the second case, the decision-maker initiates travel. The remaining unavailability $\delta$ changes to the travel time $\theta_{\ell m}$ and the location $\ell$ changes to the destination $m\in\mathcal{M}\setminus\{\ell\}$. The third and fourth case represent initiating maintenance at the current location, where the maintenance type (PM or CM) depends on the status $x_{\ell}$ of the machine at the location of the engineer. The duration of maintenance is either $t^{\textrm{PM}}_{m} \in \mathbb{N}^+$ or $t^{\textrm{CM}}_{m} \in \mathbb{N}^+ $, and $\delta$ changes accordingly. Initiating maintenance on machine $m$ advances the degradation state $x_m$ to $x^\textbf{\textrm{f}}_m$, indicating that the machine is unavailable during maintenance, which is in line with the modeling assumptions.

For $h^u = (x^u_1, \ldots, x^u_M, \ell^u, \iota^u, \delta^u, \hat{t}^u_1, \ldots, \hat{t}^u_M )
\overset{t \rightarrow t+1}\longrightarrow h' = (x'_1, \ldots, x'_M, \ell', \iota', \delta', \hat{t}'_1, \ldots, \hat{t}'_M )$, the determination of $h'$ is performed in two steps. In step 1, we check for every machine if there is further degradation or completion of maintenance. In step 2, we update the remaining state variables of the model. The state changes as follows:
\begin{enumerate}
\item The machines evolve according to the following triptych of cases:
If $(\ell^u, \iota^u, \delta^u) = (m, 1, 1)$, then $(x_m', \hat{t}_{m}')=(x^\textbf{\textrm{h}}_m, 0)$. This first case represents completion of a maintenance job, resulting in a transition to the as-good-as-new state. 
Else, with probability $\mathbb{P}(\hat{t}^u_m):= \mathbb{P}(T_m^{x^u_m} = \hat{t}^u_m+1| T_m^{x^u_m} > \hat{t}^u_m)$, $(x_m', \hat{t}_{m}')=(x^u_m+1,0) $. 
This second case represents the transition to a subsequent degradation state and $\mathbb{P}(\hat{t}^u_m) $ denotes the corresponding probability. 
Otherwise, with probability $1-\mathbb{P}(\hat{t}^u_m)$, $(x_m', \hat{t}_{m}')=(x^u_m, \hat{t}^u_m+1) $. This third case captures that the machine degradation state does not change, which implies that the elapsed time since the last transition increases by one.
\item $ (\ell^u, \textrm{max}(\delta^u - 1,0), \iota^u) \overset{t \rightarrow t+1}\longrightarrow (\ell', \delta', \iota')$; i.e., the remaining unavailability $\delta'$ of the engineer, due to an ongoing action, is decreased by one, if possible.
\end{enumerate}

\subsection{Cost structure}\label{subsubsec: costs}
Initiating maintenance on machine $m\in\mathcal{M}$ incurs costs $c^{\textrm{PM}}_{m} \in\mathbb{R}^+$ or $c^{\textrm{CM}}_{m} \in\mathbb{R}^+$, for PM or CM, respectively. We assume that $c^{\textrm{CM}}_{m} \geq c^{\textrm{PM}}_{m}$. An asset $m$ is said to be down when it is in the failed state or it is under repair, viz. in state $x^\textbf{\textrm{f}}_m$. The cost of downtime is $c^{\textrm{DT}}_{m}\in\mathbb{R}^+$ per time unit, independently of the cause of downtime. Summing up, when taking action $u\in\mathcal{U}(h)$ in state $h$, the incurred costs are:
\begin{equation}
\label{eq: network_costs}
 C(h,u) = c^{\textrm{CM}}_{\ell}\mathds{1}_{\{u = v, x_\ell = x^\textbf{\textrm{f}}_\ell\}} + c^{\textrm{PM}}_{\ell}\mathds{1}_{\{u = v, x_\ell < x^\textbf{\textrm{f}}_\ell\}}+ \sum_{m\in\mathcal{M}} c^{\textrm{DT}}_{m}\mathds{1}_{\{ x_m = x^\textbf{\textrm{f}}_m \}}.
\end{equation}

\subsection{Observed network states and censoring}\label{subsubsec: ObservedStates}
For information levels $\textbf{L}_0$ to $\textbf{L}_3$, the underlying state $h = (x_1, \ldots, x_M, \ell, \iota, \delta, \hat{t}_1, \ldots, \hat{t}_M )$ is not fully observable. Instead, one observes $o = (\tilde{x}_1, \ldots, \tilde{x}_M, \ell, \iota, \delta, \tilde{t}_1, \ldots, \tilde{t}_M) \in \Omega = \mathcal{X}_1 \times \ldots \times \mathcal{X}_M \times \mathcal{M} \times I \times \Delta \times \mathbb{N}^M$, where, for $m\in \mathcal{M}$, $\tilde{x}_m\in\mathcal{X}_m$, cf. Section \ref{subsec: observations}, and where $\tilde{t}_m$ corresponds to the time since machine $m$ transitioned to observable phase $\tilde{x}_m$.

\subsection{Objective }\label{subsec: objective}
We are interested in a policy, say $\pi^\textbf{L}$, with $\textbf{L} \in \{\textbf{L}_0, \textbf{L}_1, \textbf{L}_2, \textbf{L}_3\}$ denoting the information level, which minimizes the total expected discounted cost. More precisely, a policy is defined as a sequence of decision rules, i.e., $\pi^\textbf{L} = (\pi_1^\textbf{L}, \pi_2^\textbf{L}, \ldots,\pi_t^\textbf{L}, \ldots )$, where the decision rule $\pi_t^\textbf{L}$ assigns the probability of the action to be taken at time $t$, given the history (observed states from time 0 until time $t$ and the respective actions). Let $\gamma \in [0,1)$ be the discount factor and $J(\pi^\textbf{L})$ be the total expected discounted cost. Then, the objective is to find the optimal policy $\pi^{\textbf{L}}_*$ satisfying
\begin{equation}
\label{eq: objective}
 \pi^{\textbf{L}}_* = \textrm{arg }\underset{\pi^\textbf{L}}{\textrm{min }} J(\pi^\textbf{L}) = \textrm{arg }\underset{\pi^\textbf{L}}{\textrm{min }} \lim_{T\to\infty} \mathbb{E}_{\pi^\textbf{L}} \Bigg[ \sum_{t=0}^T \gamma^t C\left(O(t),U(t)\right) \Bigg| O(0)=o\Bigg],
\end{equation}
where $(O(t),U(t))$ denotes the tuple of the observed state and the respective action given the policy $\pi^\textbf{L} $ at time $t$, $t\geq0$, and $C(\cdot)$ denotes the associated cost (maintenance and downtime).
\section{Solution approaches}\label{sec: solutions}
In this section, we discuss three solution approaches for the introduced class of problems.

\subsection{Ranking heuristics} \label{sec: greedy}
In this section, we construct a simple class of $\infolevel{1}$ heuristics: The \emph{reactive} heuristic, which maintains only failed machines, and the \emph{greedy} heuristic, which maintains both alerted and failed machines. To select which machine to maintain next, both heuristics require the introduction of a \emph{ranking} between assets. The ranking is made on the premise of failures and on the premise of alerts, accounting also for travel times and costs.

{\em Failure time \textsc{(F)}.} Given the alert information, the machines are ranked on their estimated failure time. Assets in the failed state are prioritized over any of these potential future failures.
Let $t\in\mathbb{N}_0$ be the current time and $t^\textbf{\textrm{a}}_{m}$ be the time at which the last alert for the machine was generated, and define the failure time estimate $\tilde{T}^\textbf{\textrm{f}}_{m} = \textrm{max}\left\{t, t^\textbf{\textrm{a}}_{m} + \mathds{E}\left[T^\textbf{\textrm{f}}_{m}\right] \right\}$. 

{\em Travel time \textsc{(T)}.} Assets are ranked based on proximity to the engineer. 
For distant assets, the travel times will be long, meaning valuable time will be lost while no maintenance is conducted. The decision-maker opts for maintaining assets over traveling. 

{\em Cost savings \textsc{(C)}.} To account for assets with different cost structures (economic risk), we adopt a ranking that is constructed as follows: For assets in the alert state, suppose asset $m$, we compute the immediate benefit of PM over CM, specifically $c_m = (c^{\textrm{CM}}_{m} - c^{\textrm{PM}}_{m}) + (t^{\textrm{CM}}_{m} - t^{\textrm{PM}}_{m}) c^{\textrm{DT}}_{m}$. For assets in the failed state, suppose $m'$, we compute the total downtime cost when deciding to directly repair the asset, given by $ c_{m'} = (\theta_{\ell, m'} + t^{\textrm{CM}}_{m'}) c^{\textrm{DT}}_{m'}$. 
The aim of this ranking is to prioritize the machines which carry the highest economic risk.

For both heuristics, the assets to be maintained are ranked based on the three rankings in a chosen order, e.g., [\textsc{F},\textsc{T},\textsc{C}]. The reactive heuristic (in contrast to the greedy heuristic) is obtained by ranking only the failed assets. Heuristics like the ones presented above are a reasonable choice for companies faced with problems similar to the DTMPA; we restrict attention to two variants here, a range of ranking heuristics could be further identified.

\subsection{Traveling maintainer heuristic}

The \textit{traveling maintainer heuristic} (TMH) policy is inspired by the traveling maintainer problem (TMP) that arises for a group of assets which require maintenance at deterministic maintenance deadlines. Namely, the TMH policy globally consists of the following two steps: 

\emph{First order approximation.} In this step, we approximate the random times that an alerted asset might fail by deterministic maintenance deadlines. Using these approximations, the problem reduces to a deterministic TMP.

\emph{Optimization step.} Solve the constructed TMP instance by optimizing the \textit{schedule}, i.e., the time at which maintenance is initiated for all assets, for each possible order in which the assets can be visited. A schedule is optimal when it achieves the minimum discounted cost in the deterministic optimization problem; ties are broken by selecting uniformly at random from the schedules with the longest duration.

When applying the THM policy at time $t\in\mathbb{N}_0$, we first construct a \emph{first order approximation}. We compute deterministic maintenance deadlines, say $t_m$, for the various failed and alerted assets. For failed assets, we set $t_m$ equal to the corresponding failure time $t^{\textbf{\textrm{f}}}_m\le t$. For alerted assets, it may be advantageous to postpone maintenance, as maintenance incurs costs and induces costly downtime. Accordingly, for alerted assets, we compute $t_m$ as the cost-optimal maintenance time $\tau$ associated with a time-based maintenance (TBM) policy. Note that effective DTMPA downtime costs depend on the \emph{response time} (the time needed before the engineer starts maintenance at the asset location), which is $0$ when $|\mathcal{M}| = 1$, but which is difficult to characterize when $|\mathcal{M}|>1$. Assuming a response time of $0$ results in a $\tau$-TBM policy with total expected discounted cost given by:
\begin{equation}\label{eq: disctmbJ}
 J(\tau) = \frac{ \bar{c}_m^{\textrm{CM}}\mathds{E}[\gamma^{T^\textbf{\textrm{a}}_m + T^\textbf{\textrm{f}}_m}\mathds{1}_{\{ T^\textbf{\textrm{f}}_m \leq \tau \}} \mid X_m(0) = x^\textbf{\textrm{h}}_m] + \bar{c}_m^{\textrm{PM}}\mathds{E}[\gamma^{T^\textbf{\textrm{a}}_m + \tau}\mathds{1}_{\{ T^\textbf{\textrm{f}}_m > \tau \}} \mid X_m(0) = x^\textbf{\textrm{h}}_m] }{ 1 -\gamma^{t^\textrm{CM}_m} \mathds{E}[\gamma^{T^\textbf{\textrm{a}}_m + T^\textbf{\textrm{f}}_m}\mathds{1}_{\{ T^\textbf{\textrm{f}}_m \leq \tau \}} \mid X_m(0) = x^\textbf{\textrm{h}}_m] - \gamma^{t^\textrm{PM}_m}\mathds{E}[\gamma^{T^\textbf{\textrm{a}}_m + \tau}\mathds{1}_{\{ T^\textbf{\textrm{f}}_m > \tau \}} \mid X_m(0) = x^\textbf{\textrm{h}}_m] }.
 \end{equation}
 
A proof of this claim can be found in \ref{app:theorems}. The optimal $\tau$-TBM policy\footnote{The \emph{average cost criterion} can be computed by evaluating the limit of $(1-\gamma)J(\tau)$ as $\gamma\rightarrow 1$.} $\tau^*$ can be found by numerically computing $\tau^* = \textrm{arg min } J(\tau)$. Now, for an alerted machine, say $m$, with corresponding alert time $t_a$, we set the deadline as:
\begin{equation}\label{eq: tmh-4}
t_m = \textrm{max}\left\{t, t_a + \tau^* + 1 \right\}.
\end{equation}

It is now assumed that asset $m$ fails at time $t_m$, thus maintenance is preferably initiated at time $t_m-1$. Accordingly, the \emph{optimization step} plans a cost-efficient route and schedule to maintain the failed and alerted assets. 

Let $\mathcal{M}_t$ denote the subset of all failed and alerted machines at time $t$ and let $\Sigma_{\mathcal{M}_t}$ denote the permutations of $\mathcal{M}_t$. For each path $\sigma \in \Sigma_{\mathcal{M}_t}$, we construct an initial schedule (cf. Algorithm \ref{alg: tmh-1} in \ref{app:tmhalgos}) that is tight in the sense that travel actions and maintenance actions are executed without intermediate delay. To improve this schedule, we may introduce intermediate delay/slack. To simplify slack assignment, we identify conditions such that any voluntary violation of maintenance deadlines is sub-optimal in the constructed TMP. Define the combined costs $\Bar{c}_m^{\textrm{CM}} = c_m^{\textrm{CM}}+ c_m^{\textrm{DT}}\frac{\gamma^{t_m^{\textrm{CM}}} - 1 }{\gamma-1}$ and $\Bar{c}_m^{\textrm{PM}} = c_m^{\textrm{PM}}+c_m^{\textrm{DT}}\frac{\gamma^{t_m^{\textrm{PM}}} - 1 }{\gamma-1}$, where $\sum_{i=1}^t \gamma^{i-1} = \frac{\gamma^t-1}{\gamma-1}$. Voluntary violations of maintenance deadlines are suboptimal if the following four conditions hold for all $m$: (i) $\gamma\bar{c}^\textrm{CM}_m > \bar{c}^\textrm{PM}_m$ (it is loss-making to perform CM instead of PM), (ii) $c^\textrm{DT}_m + \gamma \bar{c}^\textrm{CM}_m > \bar{c}^\textrm{CM}_m$ (it is loss-making to postpone a CM action), (iii) $\gamma\bar{c}^\textrm{CM}_m - \bar{c}^\textrm{PM}_m > \sum_{m^\prime \in \mathcal{M}\backslash\{m\}} (1-\gamma)\bar{c}^\textrm{PM}_{m^\prime}$ (it is loss-making to perform CM instead of PM to delay an arbitrary number of other PM actions) and (iv) $c^\textrm{DT}_m + (\gamma-1) \bar{c}^\textrm{CM}_m > \sum_{m^\prime \in \mathcal{M}\backslash\{m\}} (1-\gamma)\bar{c}^\textrm{PM}_{m^\prime}$ (it is loss-making to delay a CM action to delay an arbitrary number of PM actions).

To find an optimal schedule for a given route, we thus first delay the last repair as much as possible, which will allow us to delay the previous repair as well, and so on. See Algorithm \ref{alg: tmh-2} in \ref{app:tmhalgos} for details. Given a path $\sigma$ and the optimized schedule $\textbf{t}^\sigma_\textrm{opt}$, the tuple $(\sigma, \textbf{t}^\sigma_\textrm{opt})$ induces a policy in the DTMPA, which induces a trajectory of observations $o_t, \ldots, o_{\textbf{t}^\sigma(2|\mathcal{M}_t|+1)}$ for which we can compute the discounted cost. Select the tuple $(\sigma, \textbf{t}^\sigma_\textrm{opt})$ that minimizes the discounted costs.

The heuristic advantages are numerous. Firstly, the heuristic considers jointly the network layout, cost structures and the information captured in the alerts. Secondly, one only needs to solve a TMP when a new alert or failure arrives, otherwise the previous solution can continue to be used for planning actions. A disadvantage concerns the algorithm complexity as $|\Sigma_{\mathcal{M}_t}| = |\mathcal{M}_t|!$, which in the worst case implies a run time of $\mathcal{O}(2^{M})$. 

The TMH policy picks the path where it meets the most costly maintenance deadlines, preferably scheduling maintenance at time $t_m-1$ but possibly expediting this decision to ensure that as many maintenance deadlines as possible are met. The policy can adequately assess the immediate benefit of PM over CM, but it may potentially be less suitable when response times are substantial and downtime costs are the dominant factor. 

\subsection{Deep reinforcement learning}\label{sec: drl}

We adopt Eq.(\ref{eq: objective}) as a DRL objective under $\textbf{L}_0$. In the corresponding POMDP formulation (cf. Section~\ref{subsubsec: ObservedStates}), observable network states summarize the relevant \emph{history} in the form of elapsed times since the last observable transition. DRL is applicable when the proposed environment can be simulated for a number of episodes $E \in \mathbb{N}$. Note that this is possible when learning online by observing past state transitions. We could also reuse transitions from previous policies or implement a simulation environment that mimics real-world data before deploying the learned agents.

We seek to learn a policy $\pi^{\textbf{L}_0}$, mapping observable network states to action probabilities. 
Learning is possible by sampling a number of trajectories following a policy, and evaluating its performance with respect to the main objective in Eq.(\ref{eq: objective}). 
To evaluate performance, we define the standard characterization of the state-action value function measuring the expected sum of costs starting from a given observable network state $o$, action $u$, starting at time $t$ as 
\begin{equation}
 q^{\pi^{\textbf{L}_0}} (o,u) = \mathbb{E}_{\pi^{\textbf{L}_0}} \Bigg[ \sum_{k=0}^\infty \gamma^k C\left(O(t+k),U(t+k)\right) | O(t) = o, U(t) = u \Bigg].
\end{equation}

To avoid notation clutter, we refer to the class of DRL policies interchangeably as $\pi^{\textbf{L}_0}$ and $\pi$, costs at time $t$ as $c_t := C\left(O(t),U(t)\right)$ and observable network states as $o_t := O(t)$. 
We assume no access to the environment transition probabilities (model), in which case a policy $\pi$ may be learned as a mapping from states to actions or extracted from value function approximations. We propose a value (distribution) approximation method, referred to as $n$-step quantile regression double Q-Learning ($n$QR-DDQN) detailed in the forthcoming sections.

\subsubsection{$n$-step quantile regression double Q-Learning } \label{subsec:deep-q}

Similar to \citep{hessel2018rainbow,badia2020agent57}, we extend deep Q-Learning (DQN)\citep{mnih2015human} by combining several modifications that improve performance, training times and maintain comparable sampling complexity \citep{hessel2018rainbow,dabney2018distributional,van2016deep}, detailed below. In DQN, off-policy RL is combined with neural network (NN) approximation to estimate values of state-action pairs. State information is passed to a NN $q_w : \Omega \times \mathcal{U} \rightarrow \mathds{R} $, where $w$ are trainable parameters. Similar to the original DQN, we store a set of transitions $(o_t, u_t, c_t, o_{t+1})$ in a \textit{replay memory} $\mathcal{D}$ (hash table). Transitions are collected following an $\epsilon$-greedy exploration scheme, i.e., picking a random action with probability $\epsilon$, otherwise selecting the action with the lowest estimated $q$ values. For training, we sample a mini-batch of $N_B \in \mathds{N}$ transitions from $\mathcal{D}$ uniformly at random to decorrelate past transitions.
Based on the mean Bellman optimality operator, the parameters of the NN are trained to minimize the one-step Temporal Difference (TD[0]) error \citep[p.~131]{sutton2018reinforcement}, i.e., the error between the current estimate of the value of a state and its one-step update, acting greedily with respect to the approximated $q$ function via a mean squared error (MSE) loss function. DQN introduces a \emph{target network} with parameters $\bar{w}$, i.e., a periodic copy of the online $q_w$ that is not directly optimized and used as a target for the future $q$-value estimates. This copy stabilizes training and represents a fixed target that does not change at each update of $w$ \citep{mnih2015human}. This copy is updated every $P \in \mathds{N}$ episodes. 

\paragraph{Double deep Q-Learning}
DQN can be affected by an underestimation bias (overestimation in maximization) \citep{van2016deep}. In a noisy environment, this can lead to over-optimistic actions and prevent learning. Thus, we employ double DQN (DDQN) \citep{van2016deep} to decouple the action selection from its evaluation.
In this way, the action selection when updating $w$ remains dependent on the online $q_w$ (the current network being optimized), effectively reducing the chance of underestimating action values on the target network $q_{\bar{w}}$. DDQN effectively changes the loss of DQN to
\begin{equation}
 \mathcal{L}(w) = (c_{t} + \gamma q_{\bar{w}} (o_{t+1},\arg \min_{u^\prime} q_w (o_t,u^\prime) ) - q_w (o_t,u_t))^2. 
\end{equation}

\paragraph{Multi-step reinforcement learning} 
One-step temporal difference updates of DQN are biased when $q_w$ estimates are far from true $q^\pi$ values. To obtain better estimates, TD[0] updates can be generalized by bootstrapping over longer horizons. This reduces bias but it can lead to high variance due to the environment's stochasticity \citep{jaakkola1994convergence}. Nevertheless, using a larger bootstrapping horizon can empirically lead to higher performance and faster learning. We include longer horizons considering the truncated $n$-step future discounted costs 
\begin{equation}
\label{eq: truncated-costs}
 C^{t:t+n} = c_t + \gamma c_{t+1} + \gamma^2 c_{t+2} + \ldots + \gamma^{n-1} c_{t+n-1}. 
\end{equation}
Using such costs makes the updates of DQN on-policy and results in value function updates that rely on old and inaccurate transitions in the replay memory. 
When the policy that generates the transitions in the replay memory differs substantially from the current policy, the resulting updates can cause inaccurate evaluations of the $q$ function under the current policy. To make updates off-policy again, we need to introduce importance sampling terms \citep{de2018multi}. However, these terms can also lead to higher variance. In practice, adding $n$-step terms (even without importance sampling) can aid learning, while high variance can be controlled by small values of $n$ \citep{hernandez2019understanding}. 
Thus, to balance bias (one-step updates) and variance (multi-step updates), we include an additive $n$-step term weighted by $\alpha \in \mathds{R}$ in the original DQN training objective. The parameter $\alpha$ is introduced to control how much extra variance we allow when updating the estimated $q$ values.

\paragraph{Distributional reinforcement learning} 
We consider the failure of crucial assets, in which risk-adjusted costs need to be considered when selecting actions, thus learning a distribution of future costs allows us to improve policy performance by taking into account the stochasticity of the problem explicitly. 
Distributional RL \citep{bellemare2017distributional} aims to learn not a single point estimate of values but a distribution of returns, i.e., the distribution of the random variable $z^\pi(o,u) = \sum_{k=0}^\infty \gamma^k c_{t+k} $ for given $o$ and $u$, where $q^\pi(o,u) := \mathbb{E}_\pi[z^\pi(o,u)]$
\citep{bellemare2017distributional}. We employ quantile regression DQN (QR-DQN) \citep{dabney2018distributional}, in which the distribution of returns is modeled via a quantile regression on $N$ data points with fixed uniform weights of the CDF of $z^\pi$ as $\tau_i = \frac{i}{N}, \, i=1, \ldots, N$.

Thus, we estimate the quantiles by learning a model $\psi_w: \Omega \times \mathcal{U} \rightarrow \mathds{R}^N$, mapping each state-action pair to a probability distribution supported on $\{\psi^i_{w}(o,u)\}^N_{i=1}$, i.e., $ z_\psi(o,u) := \frac{1}{N} \sum^N_{i=1} \delta_{\psi^i_{w}(o, u)}$, 
where $\delta_z$ represents a Dirac at $z \in \mathds{R}$. Moreover, each $\psi^i_w$ represents an estimation of the quantile value corresponding to the CDF quantile weight $\hat{\tau}_i=\frac{\tau_{i-1}+\tau_{i}}{2}$, $\tau_0 = 0$, i.e., the data points that minimize the 1-Wasserstein metric to the true $z^\pi$ \citep{dabney2018distributional}. Note that $z_\psi(o,u)$ can be used to compute the usual $q_w$ estimates as $q_w(o, u) = \frac{1}{N}\sum^N_{i=1} \psi^i_{w}(o, u)$. To achieve unbiased gradients in QR-DQN, we replace the usual MSE loss of DQN with an asymmetric variant of the Huber loss \citep{huber1964} defined as $ \rho^\kappa_{\hat{\tau}_i}(\nu) = |\hat{\tau}_i - \mathds{1}_{\{\nu<0\}}| \mathcal{L}_{\kappa}(\nu)$, where
\begin{equation}
 \mathcal{L}_\kappa (\nu) = \begin{cases}
 \frac{1}{2}{\nu^2} & \text{for } |\nu| \le \kappa, \\
 \kappa (|\nu| - \frac{1}{2}\kappa), & \text{otherwise,}
\end{cases}
\end{equation}
 $\nu$ corresponds to pairwise TD errors and $\kappa \in \mathds{R}$. Similar to the mean Bellman optimality operator, the distributional Bellman optimality operator \citep{bellemare2017distributional}, i.e.,
 $ z^\pi(o,u) =C_u(o)+ \gamma z^\pi(o^\prime, \arg \min_{u^\prime} \mathds{E}_\pi[z^\pi(o^\prime, u^\prime)]),$ where $o^\prime$ is the next observable network state, is used to approximate quantiles based on the $n$-step loss function (with DDQN correction) as:
 
\begin{equation}
 \mathcal{L}^{t:t+n}_{QR}(w) = \frac{1}{N} \sum^N_{i=1} \sum^N_{i^\prime=1}\rho^\kappa_{\hat{\tau}_i} (y^{i^\prime}_{t:t+n} - \psi^i_{w}(o_t,u_t)),
\end{equation}

where $y^{i^\prime}_{t}= C^{t:t+n} + \gamma^n \psi^{i^\prime}_{ \bar{w}}(o_{t+n}, \arg \min_{u^\prime} \sum^N_{i=1} \psi^i_{w}(o_{t+n}, u^\prime))$. Finally, we add the $n$-step QR loss term to a single-step term to arrive at the objective:
\begin{equation}
 \label{eq:qrddqn-nstep-loss}
 w^* = \arg \min_{w} \hat{\mathcal{L}}_{\text{$n$QR-DDQN}}(w) = \arg \min_{w} \left[\mathcal{L}_{QR}(w) + \alpha \mathcal{L}_{QR}^{t:t+n}(w)\right],
\end{equation}
note that $\mathcal{L}_{QR}(w)$ is the same as $\mathcal{L}_{QR}^{t:t+1}(w)$ and corresponds to TD[0] (one-step) updates. This is the training objective for $n$QR-DDQN with the complete algorithm depicted in \ref{app:rlalgo}.
\begin{figure}[ht]
\centering \includegraphics[width=1\linewidth]{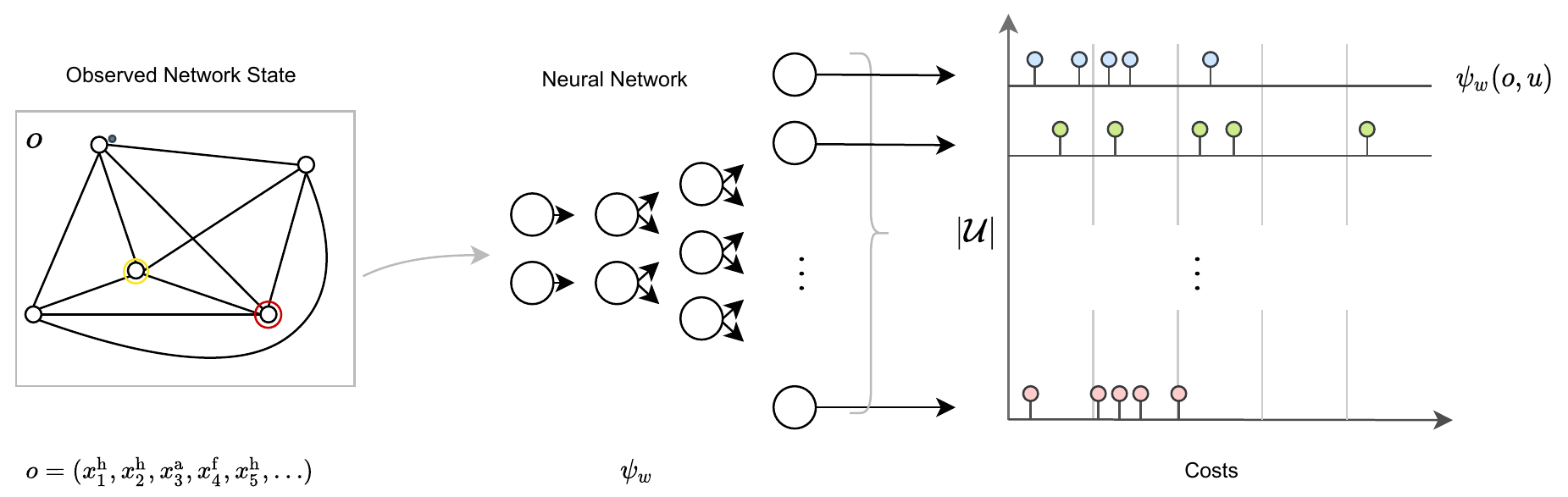}
 \caption{A neural network $\psi_w: \mathds{R}^d \rightarrow \mathds{R}^{N\times |\mathcal{U}|}$, mapping observable state vectors to estimates of the quantile locations of the cost distributions.}
 \label{fig:NN}
\end{figure}
\paragraph{Neural network architecture} \label{subsec: deep-nn}
We employ a NN $\psi_w$, depicted in Figure \ref{fig:NN}, that maps raw observable network state vectors to pairs of actions and 
$N$ quantiles of the distribution of $z^\pi$, i.e., $\psi_w: \mathds{R}^d \rightarrow \mathds{R}^{N\times |\mathcal{U}|}$. First, an observable network state is flattened into an input vector $\bm{\zeta} \in \mathds{R}^{d}$, where $d=3M+2$.
The vector $\bm{\zeta}$ is then passed through a series of layers $l \in \{1, \ldots, L\}$ of the form $ \bm{\zeta}^l = \sigma^l(\mathbf{W}^l\bm{\zeta}^{l-1}+\bm{b}^l)$, 
where $\mathbf{W}^l \in \mathds{R}^{d^{l} \times d^{l-1} }$, $\bm{b}^l \in \mathds{R}^{d^{l}}$, $d^l \in \mathds{N}$, $d^0 = d$, $\bm{\zeta}^{0} = \bm{\zeta} $ and $\sigma^l$ is a (non-)linear activation function in every hidden layer, i.e., $l \in \{1, \ldots, L-1\}$. In the output layer $L$, $d^L = |\mathcal{U}| \times N $, where $|\mathcal{U}|$ is the size of the action space and $\sigma^L$ is the identity function. The parameters $w = \{\mathbf{W}^l, \bm{b}^l\}^L_{l=1}$ are updated via the gradient descent method, Adaptive Moment Estimation (Adam) \citep{kingma2015adam}, computing adaptive learning rates for each parameter, with learning rate $\lambda > 0$ aimed at minimizing the loss in Eq.(\ref{eq:qrddqn-nstep-loss}). Note that $\lambda$ controls the magnitude of the gradient updates performed in Adam.
\section{Experimental settings}\label{sec: numerics}

To benchmark the algorithms discussed in Section \ref{sec: solutions}, we construct several asset networks varying the number of machines $M \in \{1, 2, 4, 6\}$ in a network, while reducing complexity by assuming that $t^\textrm{PM}_m = t^\textrm{CM}_m = \theta_{mm^\prime} \equiv 1, \forall m \neq m^\prime \in \mathcal{M}$ and that machine degradation times are geometrically distributed (see Section~\ref{sec:machinedegrad}). Under information level $\infolevel{3}$, the DTMPA then reduces to a rather manageable MDP: We can solve problems of up to four machines to optimality using policy iteration. With this exact benchmark in hand, we will be better positioned to assess the performance of the approaches developed in Section~\ref{sec: solutions}. Note that those approaches operate on information levels $\infolevel{0}-\infolevel{2}$, for which the DTMPA becomes a POMPD with infinite state space which resists exact analysis. A detailed setup of the experiments follows in the remainder of this section. Experimental results are discussed in the following section. 

\subsection{Machine degradation}\label{sec:machinedegrad}
Machine degradation models in the experimental setup share the following characteristics. The alert state is the first non-healthy state, that is, $x^\textbf{\textrm{a}}_m \equiv x^\textbf{\textrm{h}}_m+1=2$. Transition times $T^{x_m}_m$ between states $x_m \in \mathcal{N}_m\setminus\{x^\textbf{\textrm{f}}\}$ and $x_m+1$ are geometrically distributed, i.e., $T^{x_m}_m \sim \textrm{Geo}(p^{x_m}_m)$, where $p^{x_m}_m \in (0,1)$ for all $m$ and $x_m$. We assume $p^{x_m}_m = p^\textbf{\textrm{a}}_m$ for all $x_m \in \{ x^\textbf{\textrm{a}}_m, \ldots, x^\textbf{\textrm{f}}_m-1\}$, thus $T^\textbf{\textrm{f}}_m = \sum_{k=2}^{x^\textbf{\textrm{f}}_m} \textrm{Geo}(p^\textbf{\textrm{a}}_m) \overset{\textrm{d}}{=} x^\textbf{\textrm{f}}_m-2 + \textrm{NegBin}(x^\textbf{\textrm{f}}_m-2, p^\textbf{\textrm{a}}_m)$. Geometrically distributed transition times enable us to capture degradation as a simple matrix $Q$ (cf. \cite{derman1963optimal}), where $Q_{x_m,x_m}$ denotes the per-period probability to remain in degradation state $x_m$, while $Q_{x_m,x_m+1}$ denotes the probability to transition to state $x_m+1$. Adopting this notation, we consider four types of machine degradation, denoted as follows:
\[\small{
\textrm{Q1} = \begin{bmatrix}
0.8 & 0.2 & 0\\
0 & 0.7 & 0.3\\
0 & 0 & 1\\
\end{bmatrix}\quad
\textrm{Q2} = \begin{bmatrix}
0.8 & 0.2 & 0 & 0 & 0\\
0 & 0.7 & 0.3 & 0 & 0\\
0 & 0 & 0.7 & 0.3 & 0 \\
0 & 0 & 0 & 0.7 & 0.3 \\
0 & 0 & 0 & 0 & 1 
\end{bmatrix}
\label{mat:q2}}
\]
\\
\[\small{
\textrm{Q3} = \begin{bmatrix}
0.8 & 0.2 & 0 & 0 & 0\\
0 & 0.3 & 0.7 & 0 & 0\\
0 & 0 & 0.3 & 0.7 & 0 \\
0 & 0 & 0 & 0.3 & 0.7 \\
0 & 0 & 0 & 0 & 1 
\end{bmatrix}\quad
\textrm{Q4} = \begin{bmatrix}
0.8 & 0.2 & 0 & 0 & 0 & 0 & 0\\
0 & 0.7 & 0.3 & 0 & 0 & 0 & 0 \\
0 & 0 & 0.7 & 0.3 & 0 & 0 & 0 \\
0 & 0 & 0 & 0.7 & 0.3 & 0 & 0 \\
0 & 0 & 0 & 0 & 0.7 & 0.3 & 0 \\
0 & 0 & 0 & 0 & 0 & 0.7 & 0.3 \\
0 & 0 & 0 & 0 & 0 & 0 & 1 \\
\end{bmatrix}
\label{mat:q4}}
\]
\\
Degradation model \textrm{Q1} has only three states, and hence $\mathcal{N} = \mathcal{X}$ rendering the model fully-observable. For machines that degrade following \textrm{Q1}, approaches developed in Section~\ref{sec: solutions} could in theory achieve the performance of the optimal policy computed using policy iteration (PI) under information level $\textbf{L}_{3}$. That is, the optimal solution to the MDP is in the class of $\textbf{L}_{0}$ policies. This is no longer true if machines that degrade following matrices \textrm{Q2}-\textrm{Q4} are present. Degradation models \textrm{Q2} and \textrm{Q3} both transition to an alert state with probability $0.2$, but \textrm{Q2} degrades slower afterwards, compared to \textrm{Q3}. Machine \textrm{Q4} has more underlying states, while per-period degradation is comparable to \textrm{Q2}.
\subsection{Cost structure}
We define three cost structures, namely \textrm{C1}, \textrm{C2} and \textrm{C3}, which are presented in Table \ref{tab:cost-ratio}. Each cost structure represents a distinct, realistic cost relationship between preventive and corrective costs that induces unique optimal policies favouring more or less frequent maintenance actions.
\begin{table}[]
\centering
\caption{Cost structures considered in the experiments.}
\label{table:tsplib}
\resizebox{0.55\textwidth}{!}{%
\begin{tabular}{c|ccc|c}
\toprule
Cost Structure & $c^{\textrm{PM}}$ & $c^{\textrm{CM}}$ & $c^{\textrm{DT}}$ & $\nicefrac{c^{\textrm{CM}}+c^{\textrm{DT}} }{c^{\textrm{PM}}+c^{\textrm{DT}} }$ \\ \midrule
\textrm{C1} & 0 & 9 & 1 & 10 \\
\textrm{C2} & 1 & 2 & 10 & 1 \\
\textrm{C3} & 1 & 4 & 1 & 2.5 \\
 \bottomrule
\end{tabular}
}
\label{tab:cost-ratio}
\end{table}
For example, when $ \nicefrac{\Bar{c}^{\textrm{CM}} }{\Bar{c}^{\textrm{PM}} }$ is large, i.e., when CM costs greatly surpass PM costs, we expect that preventive maintenance policies outperform reactive policies. 

\subsection{Asset networks}
We introduce $16$ \textit{asset networks}, each having a different combination of network size, cost structure and machine degradation matrices. 
We create various asset configurations for which a shorthand notation is adopted: First the number of assets is given, followed by the degradation matrices which are assumed to be spread evenly across the machines. For example, M4-Q2Q3 has four machines: Two with degradation matrix Q2 and two with matrix Q3.

We briefly discuss specifics regarding the instances. The simplest setup is M1-Q1; for this instance $\mathcal{X}_m = \mathcal{N}_m$ such that $\infolevel{3}$ has no informational advantage compared to $\infolevel{0}$. Network M1-Q4 considers a machine (Q4) where $\mathcal{X}_m \subsetneq \mathcal{N}_m$, resulting in informational advantage of $\infolevel{3}$ over the other information levels. To scale up the model complexity, we consider M2-Q2Q3 which considers two machines with different degradation rate, and M4-Q2Q3 which is obtained by doubling the M2-Q2Q3 network. The most challenging problems are obtained for M6-Q2Q3Q4: Apart from containing the most assets, note that the time between alert and failure is relatively long for Q4 machines, inducing a time-based maintenance problem where it is essential to adequately consider the risk of postponing maintenance, especially when maintenance resources are limited as is the case in the DTMPA. 

These five asset configurations are considered with cost structures C1, C2 and C3. We obtain a final configuration M6-Q2Q3Q4-C by considering a custom cost structure on M6-Q2Q3Q4: Q2 machines are assigned cost structure C2, Q3 machines the cost structure C3 and Q4 machines have cost structure C1. In total, this gives rise to $5\times3+1 = 16$ DTMPA instances. 

\subsection{Reinforcement learning training parameters}
For all experiments, we simulate for a maximum of $T=500$ time steps\footnote{Assuming a maximum error level $\xi(1-\gamma)/c \approx 0.006$ between the infinite horizon costs and truncated horizon costs at time $T$, i.e., $T > \log(\xi (1-\gamma)c^{-1}) / \log(\gamma ) - 1$ where $c = 10$.} and $E=2000$ episodes. Each new simulation episode is started with different alert/failure arrivals following identical probability distributions. We employ a NN comprised of a linear embedding and two layers with non-linear activations. All models are trained on a Ryzen 3950X CPU and a RTX 2080Ti GPU. The remaining parameters settings are presented in \ref{app:drlparam}.

\section{Experimental results}\label{sec: results}
We generate 512 test episodes for each experimental setting. In the experiments, if the same deterministic policy is evaluated twice on an episode, the performance will be identical. 
\begin{table*}[!ht]
\centering
\caption{Average costs (95\%CI) over 512 episodes and 500 steps. 
\textbf{Bold:} Lowest cost of the proposed approaches.
}
\label{tab:avgresults}
\resizebox{1\textwidth}{!}{%
\begin{tabular}{c|l|ccc}

\toprule
\multicolumn{1}{c|}{\multirow{2}{*}{\textbf{Solution (Info. Level)}}} &
 \multicolumn{1}{c|}{\multirow{2}{*}{\textbf{Asset Network}}} &
 \multicolumn{3}{c}{\textbf{Cost Structure}} \\ \cmidrule(l){3-5} 
\multicolumn{1}{c|}{} &
 \multicolumn{1}{c|}{} &
 \multicolumn{1}{c}{\textbf{C1 {[}95\% CI{]}}} &
 \multicolumn{1}{c}{\textbf{C2 {[}95\% CI{]}}} &
 \textbf{C3 {[}95\% CI{]} } \\ \midrule

 \multirow{4}{*}{PI ($\textbf{L}_3$)} 

&
 M1-Q1 &
 16.36 &
 123.91 &
 32.72 \\
 &
 M1-Q4 &
 4.730 &
 47.582 &
 9.461 \\
 &
 M2-Q2Q3 &
 21.230 &
 190.275 &
 39.550 \\
 &
 M4-Q2Q3 &
 79.976 &
 432.440 &
 96.166 
 \\\midrule
\multirow{6}{*}{$n$QR-DDQN ($\textbf{L}_0$)} &
 M1-Q1 &
 \textbf{16.365} {[}16.171, 16.56{]} &
 \textbf{124.96} {[}123.541, 126.378{]} &
 \textbf{32.804} {[}32.443, 33.165{]} \\
 &
 M1-Q4 &
 \textbf{8.806} {[}8.608, 9.004{]} &
 \textbf{47.408} {[}46.894, 47.922{]} &
 \textbf{14.513} {[}14.343, 14.683{]} \\
 &
 M2-Q2Q3 &
 \textbf{25.139} {[}24.935, 25.343{]} &
 \textbf{202.311} {[}200.675, 203.946{]} &
 \textbf{46.995} {[}46.609, 47.381{]} \\
 &
 M4-Q2Q3 &
 \textbf{92.654} {[}90.736, 94.571{]} &
 \textbf{470.625} {[}467.640, 473.611{]} &
 \textbf{106.525} {[}105.717, 107.334{]} \\
 &
 M6-Q2Q3Q4 &
 \textbf{176.642} {[}175.234, 178.051{]} &
 \textbf{711.188} {[}705.789, 716.586{]} &
 \textbf{159.527} {[}158.294, 160.760{]} \\
 &
 M6-Q2Q3Q4-C &
 \multicolumn{3}{c}{\textbf{347.500} {[}344.986, 350.014{]}} \\ \midrule
\multirow{6}{*}{H - Greedy {[}\textsc{F},\textsc{T},\textsc{C}{]} ($\textbf{L}_1$)} &
 M1-Q1 &
 \textbf{16.365} {[}16.171, 16.56{]} &
 182.233 {[}180.126, 184.339{]} &
 \textbf{32.804} {[}32.443, 33.165{]} \\
 &
 M1-Q4 &
 16.61 {[}16.417, 16.804{]} &
 179.75 {[}177.624, 181.876{]} &
 32.818 {[}32.474, 33.163{]} \\
 &
 M2-Q2Q3 &
 30.9 {[}30.495, 31.305{]} &
 306.366 {[}304.26, 308.472{]} &
 56.692 {[}56.279, 57.105{]} \\
 &
 M4-Q2Q3 &
 112.304 {[}110.395, 114.212{]} &
 526.248 {[}523.62, 528.877{]} &
 112.306 {[}111.444, 113.168{]} \\
 &
 M6-Q2Q3Q4 &
 231.498 {[}228.491, 234.505{]} &
 741.568 {[}735.639, 747.497{]} &
 168.064 {[}166.677, 169.451{]} \\
 &
 M6-Q2Q3Q4-C &
 \multicolumn{3}{c}{379.799 {[}375.934, 383.665{]}} \\ \midrule
\multirow{6}{*}{H - Reactive {[}\textsc{F},\textsc{T},\textsc{C}{]} ($\textbf{L}_1$)} &
 M1-Q1 &
 103.361 {[}102.217, 104.506{]} &
 \textbf{124.96} {[}123.541, 126.378{]} &
 51.736 {[}51.195, 52.278{]} \\
 &
 M1-Q4 &
 40.018 {[}39.595, 40.442{]} &
 \textbf{47.408} {[}46.894, 47.922{]} &
 20.127 {[}19.912, 20.342{]} \\
 &
 M2-Q2Q3 &
 154.074 {[}153.033, 155.114{]} &
 283.619 {[}281.469, 285.768{]} &
 82.419 {[}81.845, 82.993{]} \\
 &
 M4-Q2Q3 &
 306.278 {[}304.876, 307.68{]} &
 718.158 {[}713.699, 722.617{]} &
 173.682 {[}172.799, 174.565{]} \\
 &
 M6-Q2Q3Q4 &
 396.714 {[}395.106, 398.321{]} &
 1053.663 {[}1046.581, 1060.745{]} &
 231.742 {[}230.677, 232.806{]} \\
 &
 M6-Q2Q3Q4-C &
 \multicolumn{3}{c}{473.647 {[}470.884, 476.41{]}} \\ \midrule
\multirow{6}{*}{TMH ($\textbf{L}_2$)} &
 M1-Q1 &
 	 
 \textbf{ 16.365} {[}16.171, 16.56{]} &
 \textbf{124.96} {[}123.541, 126.378{]} 	 &
 \textbf{32.804} {[}32.443, 33.166{]} \\
 &
 M1-Q4 &	
 \textbf{8.806} {[}8.608, 9.004{]} &
 \textbf{47.408} {[}46.894, 47.922{]} &
 \textbf{14.513} {[}14.343, 14.683{]} \\
 &
 M2-Q2Q3 &
 \textbf{ 25.221} {[}25.037, 25.405{]} 	 	 
 &
 235.746 {[}233.683, 237.809{]} &
 \textbf{46.757 } {[}46.404, 47.111{]} \\
 &
 M4-Q2Q3 &
 111.591 {[}110.188, 112.993{]}	&
634.828 {[}630.347, 639.309{]} &111.865 {[}110.988, 112.743{]} \\
 &
 M6-Q2Q3Q4 &
 214.435 {[}212.463, 216.408{]} &
 989.006 {[}982.159, 995.853{]} &
 181.077 {[}179.628, 182.527{]} \\
 &
 M6-Q2Q3Q4-C &
 \multicolumn{3}{c}{ 379.669 {[}376.796, 382.543{]}		} \\ \bottomrule
\end{tabular}%
}
\end{table*}
We measure the solutions' performance and present the estimated total expected discounted cost together with a $95\%$ confidence interval, after rolling out the policies for $T=500$ steps on each episode. We also include the results obtained finding the optimal policy under information level $\infolevel{3}$ following policy iteration \citep{puterman1990markov}. We were able to solve all instances with $|\mathcal{M}| \le 4$ using \cite{cartesius}, the Dutch national supercomputer. The averaged results are summarized in Table \ref{tab:avgresults}; a detailed investigation revealed that solutions that yield identical performance were in fact identical. We briefly discuss results for the various asset configurations:

\paragraph{M1-Q1} There is no information gap, and the proposed solutions match the performance and behavior of the policies found via PI. Moreover, the optimal policies coincide with the greedy heuristic (C1 and C3) and the reactive heuristic (C2).

\paragraph{M1-Q4} The optimal policy of the underlying MDP prescribes to either wait until the last state before failure (C1 and C3) or to delay maintenance to when the asset has failed (C2). Note that the latter policy is reactive, and indeed the proposed reactive heuristic is optimal. 
The best performing solutions are the TMH and $n$QR-DDQN. Since both operate under partial information of the failure probabilities, they obtain similar results by either computing an optimal TBM policy or learning via interaction. 

\paragraph{M2-Q2Q3} When increasing the number of machines, we expect the performance gap between the $\infolevel{3}$ policy (obtained via PI) and the other policies to grow. In this experiment, the learned agent achieves the best performance for all cost structures. The TMH policy obtains similar performance to $n$QR-DDQN for cost structures C1 and C3. Otherwise, the TMH policy shows poor performance, since it is does not account for positive response times. 

\paragraph{M4-Q2Q3 - M6-Q2Q3Q4} 
When the network size increases, the heuristics fail to cope with the increased complexity. The best performing heuristic (C1 and C3) is the TMH policy, showing that more information yields better policies, even if they are myopic. However, it consistently shows poor performance when machines with a C2 cost structure are part of the asset network. 
For M4-Q2Q3, $n$QR-DDQN achieves similar performance to the optimal policies under full information. It is noteworthy that the DRL agent learns to neglect one Q3 machine for M6-Q2Q3 with cost structure C1, potentially accounting for the capacity limit of the maintenance engineer.

\paragraph{M6-Q2Q3Q4-C} 
Both the greedy heuristic and the TMH solution yield similar performance. This is due to more machines residing in the observed alert state simultaneously leading the TMH to push maintenance to meet as many deadlines as possible. Thus, the benefit of having access to a residual lifetime metric is reduced. 
Nevertheless, the DRL agent outperforms all heuristics, implying that the agent is able to delay maintenance decisions more accurately. 

\subsection{Policy comparison}
In this section, we focus on asset network M2-Q2Q3. This particular experiment serves as a tractable setting to understand where the proposed policies for networks containing more than one asset differ.

\subsubsection{State visitation distribution}
We select M2-Q2Q3-C1 as an instance to provide deeper insights on how frequently the various network states are visited for each policy. This latter quantity will be referred to as visitation distribution. For an instance with two machines, we can represent a censored network state using the presence of alerts (\textbf{H}: Healthy, \textbf{A}: Alert, \textbf{F}: Failed). To reduce the size of the state space, we omit the decision-maker's location and the elapsed times. Under these simplifications, we can examine the visitation distribution of each policy. 
\begin{figure*}[]
 \centering
 \begin{subfigure}[t]{0.23\textwidth}
 \centering
 \includegraphics[width=\textwidth]{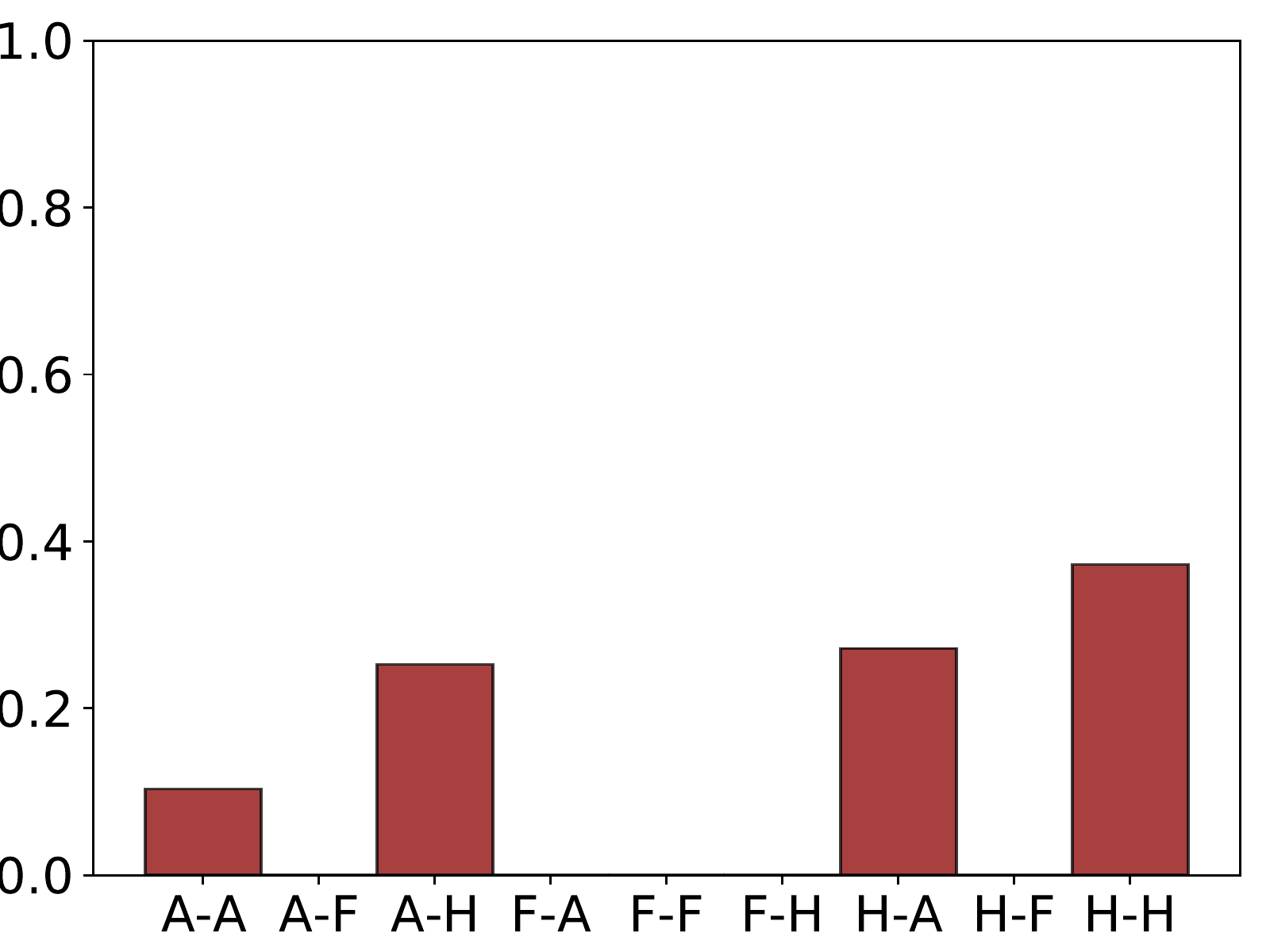}
 \caption{$n$QR-DDQN $J(\pi)$:25.139}
 \label{subfig:dqn}
 \end{subfigure}%
 \hfill
 \begin{subfigure}[t]{0.23\textwidth}
 \centering
 \includegraphics[width=\textwidth]{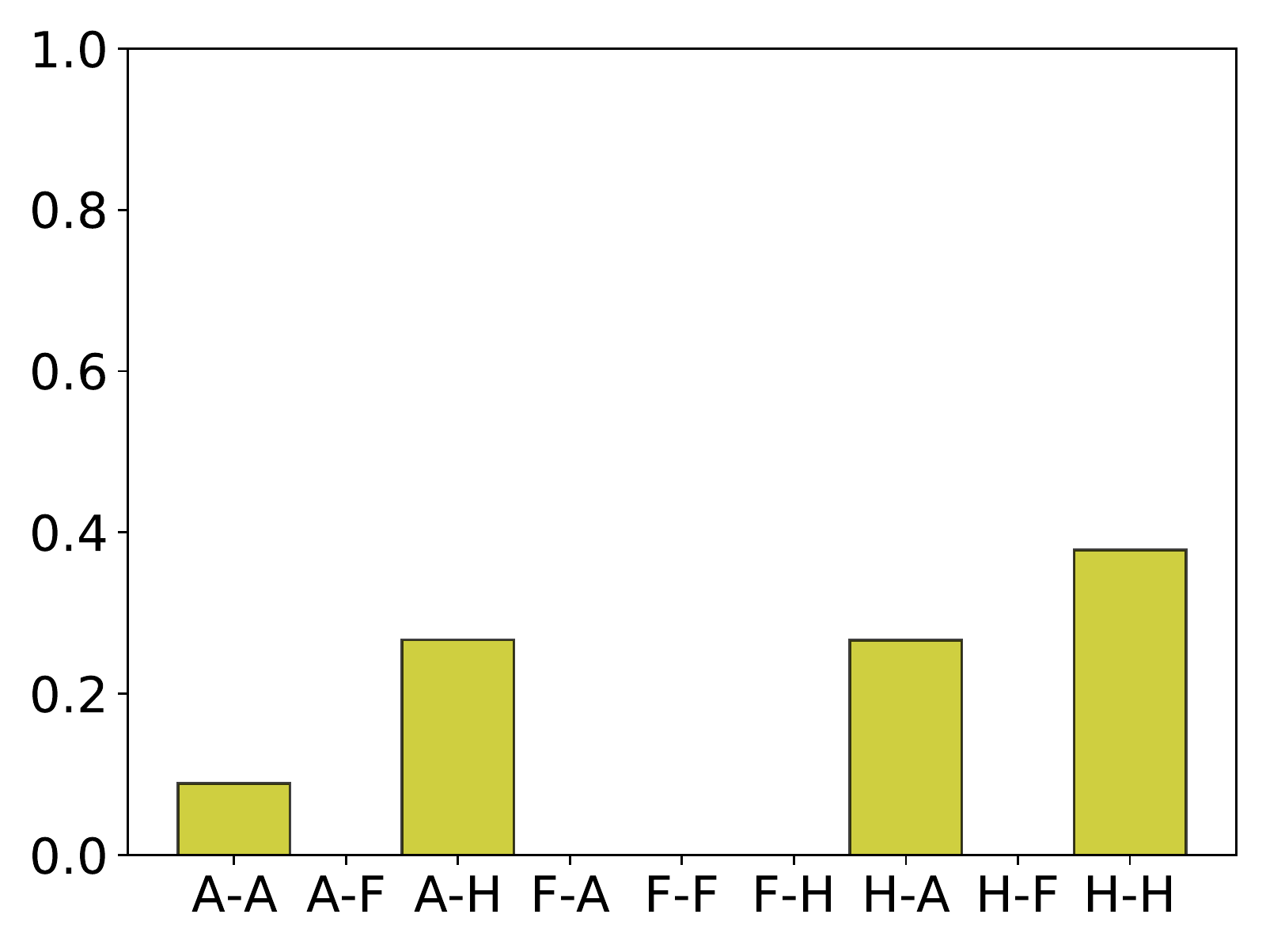}
 \caption{TMH. $J(\pi)$:25.221}
 \label{subfig:da}
 \end{subfigure}
 \hfill
 \begin{subfigure}[t]{0.23\textwidth}
 \centering
 \includegraphics[width=\textwidth]{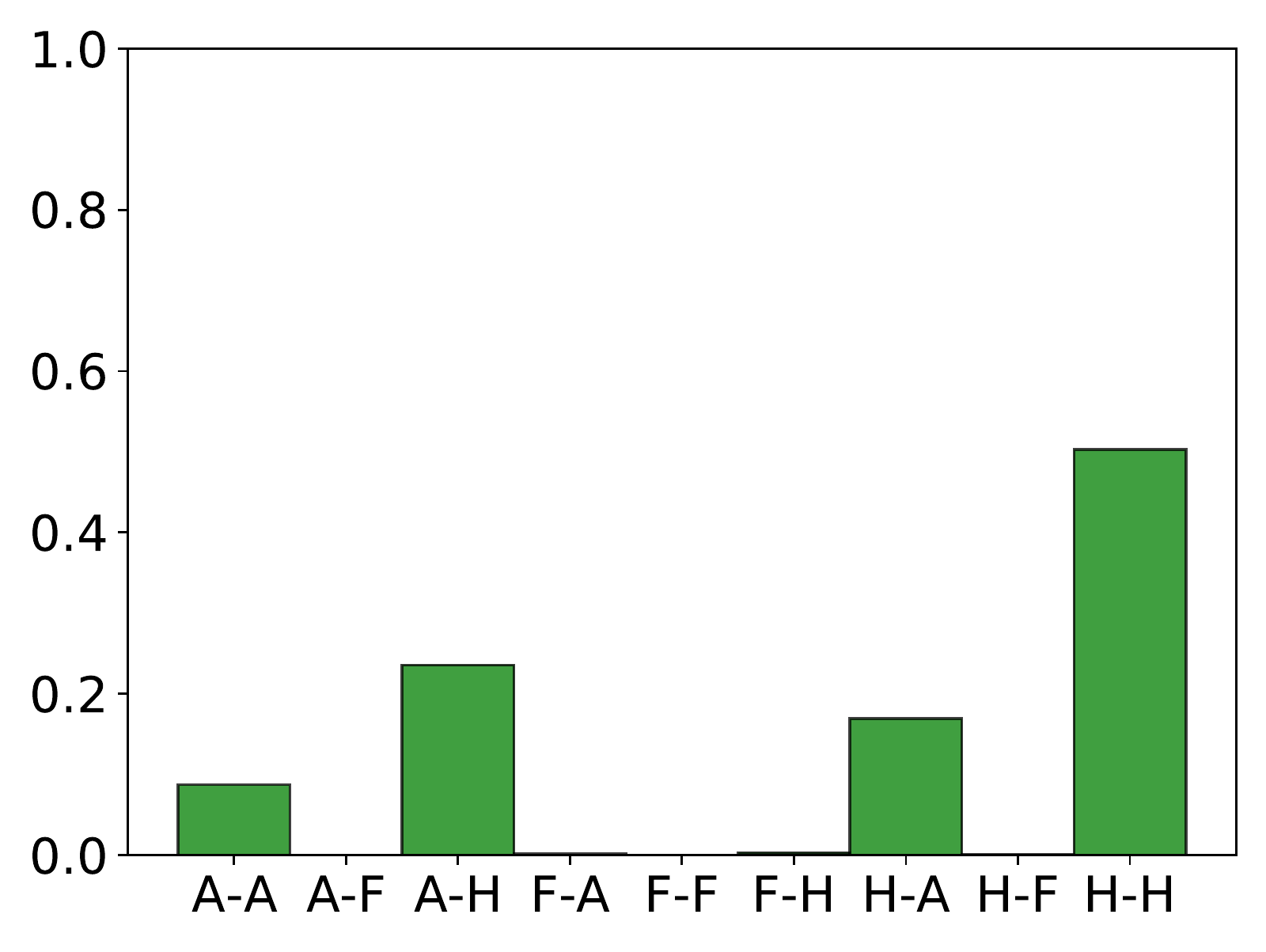}
 \caption{H-Greedy. $J(\pi)$:30.9}
 \label{subfig:greedy}
 \end{subfigure}
 \hfill
 \begin{subfigure}[t]{0.23\textwidth}
 \centering
 \includegraphics[width=\textwidth]{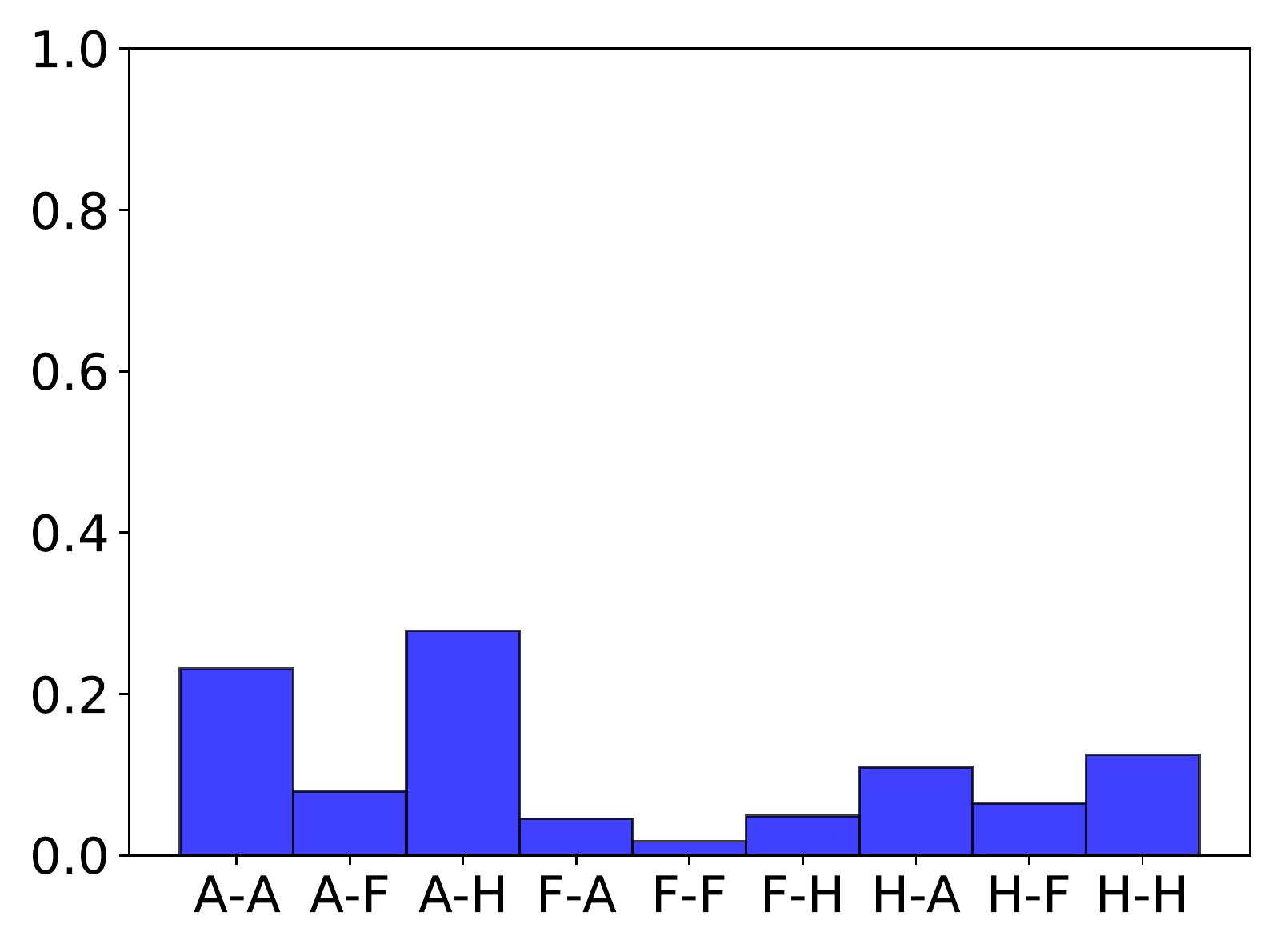}
 \caption{H-Reactive. $J(\pi)$:154.074}
 \label{subfig:reac}
 \end{subfigure}
 \caption{Visitation distribution of censored network states over all episodes of problem instance M2-Q2Q3-C1.\\(\textbf{H}: Healthy, \textbf{A}: Alert, \textbf{F}: Failed). }
 \label{fig:state-dist}
\end{figure*}
Figure \ref{fig:state-dist} shows the visitation distribution when rolling out each policy for $512$ episodes. The reactive heuristic (Figure \ref{subfig:reac}) visits more frequently the network states where one or both machines are in the failed state, which leads to high CM costs under the C1 cost structure. The greedy heuristic (Figure \ref{subfig:greedy}) mostly visits network states where both machines are in a healthy state (H), implying that unnecessary costs are incurred due to excessive PM. Since the benefit of PM over CM in C1 is large, this policy yields lower costs than the reactive heuristic. Both the TMH (Figure \ref{subfig:da}) and $n$QR-DDQN (Figure \ref{subfig:dqn}) have similar visitation distributions since both these policies attempt to delay maintenance to the ``right" time. However, the heuristic is more conservative than the learned policy, i.e., the DRL agent visits network states where the second machine is an alert state (H-A) more often. This shows that the DRL agent takes more ``risk", allowing the second machine to be in the alert state slightly longer compared to the TMH.

\subsubsection{Policy behavior}
M2-Q2Q3-C2 is used as an informative instance to study the differences between policies in a given episode, in order to relate the actions taken by each policy and the performance implications.

In Figure \ref{fig:policy-diff}, we present the time step $t=3$ for a single episode of M2-Q2Q3-C2. In this example, the machine at location 0 is of type Q2, and the machine at location 1 is of type Q3. Moreover, under cost structure C2, reactive actions in the network are relatively attractive. As such, the reactive heuristic outperforms the greedy heuristic due to fewer repairs. In the example of Figure \ref{subfig:pol-reac}, we observe that in the presence of an alert in machine location 1, the action taken by the reactive heuristic is to wait (\textsc{pass}) until the machine fails (at $t=6$) before traveling to that location. Under the greedy heuristic, (Figure \ref{subfig:pol-greedy}), the decision-maker already traveled to location 1 when the alert arrived (at $t=2$), and the action at $t=3$ is to perform preventive maintenance (\textsc{pm}) at that location, incurring the PM costs of repairing that machine. Due to the cost structure, repeatedly choosing to do PM over CM causes the greedy heuristic to accumulate higher costs.
\begin{figure*}
 \centering
 \begin{subfigure}[t]{0.24\textwidth}
 \centering
 \includegraphics[width=\textwidth]{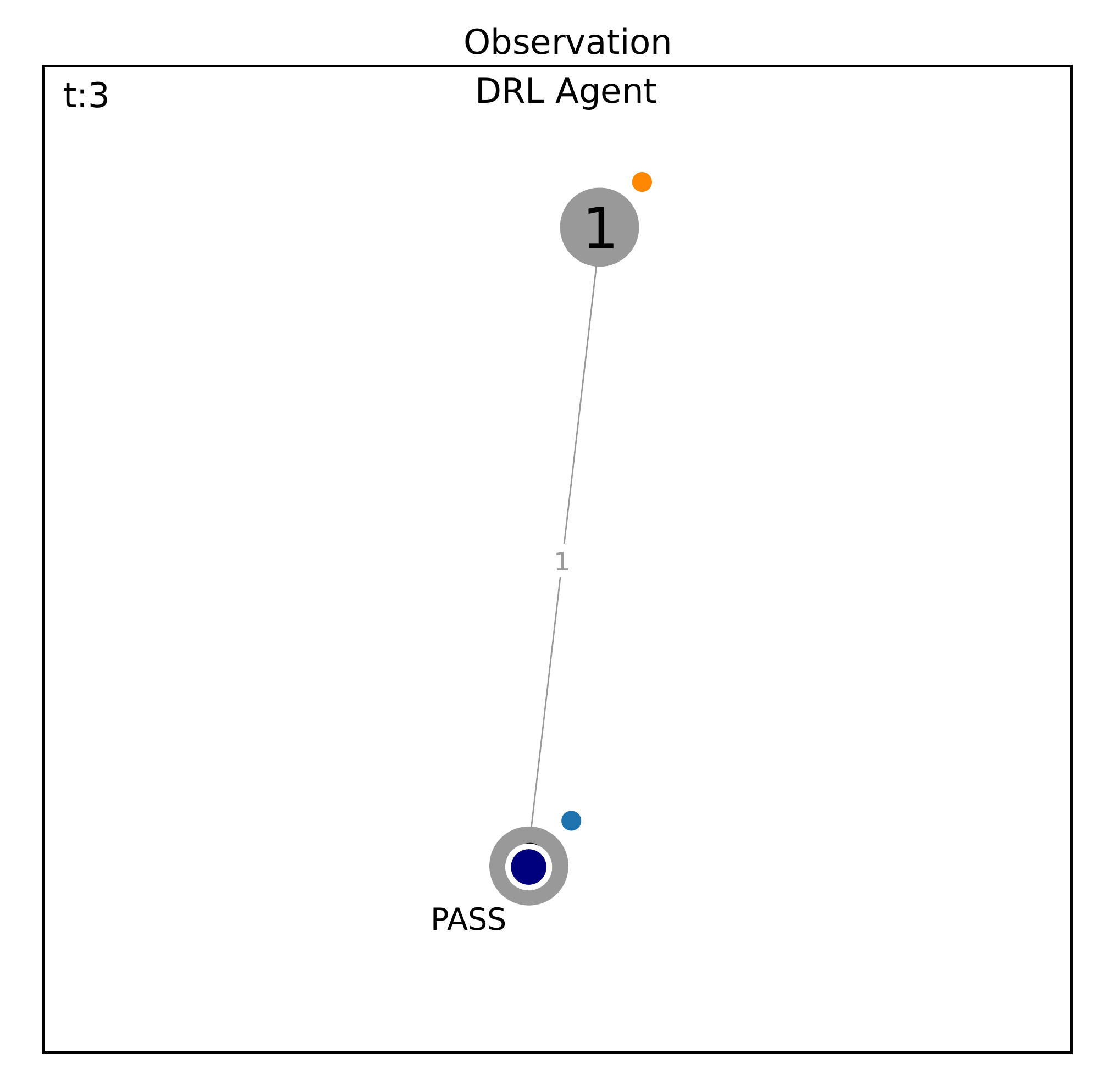}
 \caption{$n$QR-DDQN. $J(\pi)$:202.311}
 \label{subfig:pol-dqn}
 \end{subfigure}%
 \hfill
 \begin{subfigure}[t]{0.24\textwidth}
 \centering
 \includegraphics[width=\textwidth]{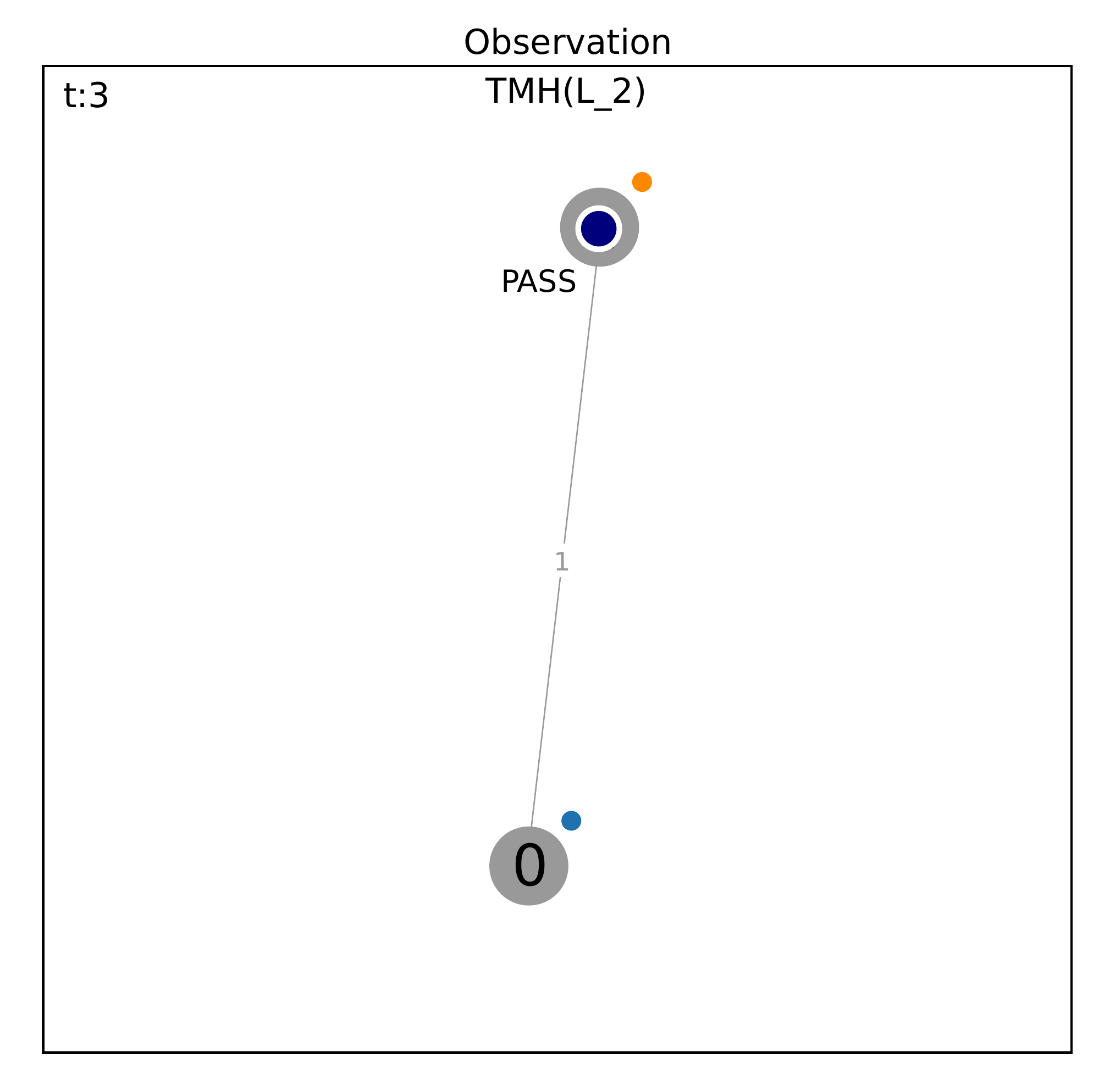}
 \caption{TMH. $J(\pi)$:235.746}
 \label{subfig:pol-da}
 \end{subfigure}
 \hfill
 \begin{subfigure}[t]{0.241\textwidth}
 \centering
 \includegraphics[width=\textwidth]{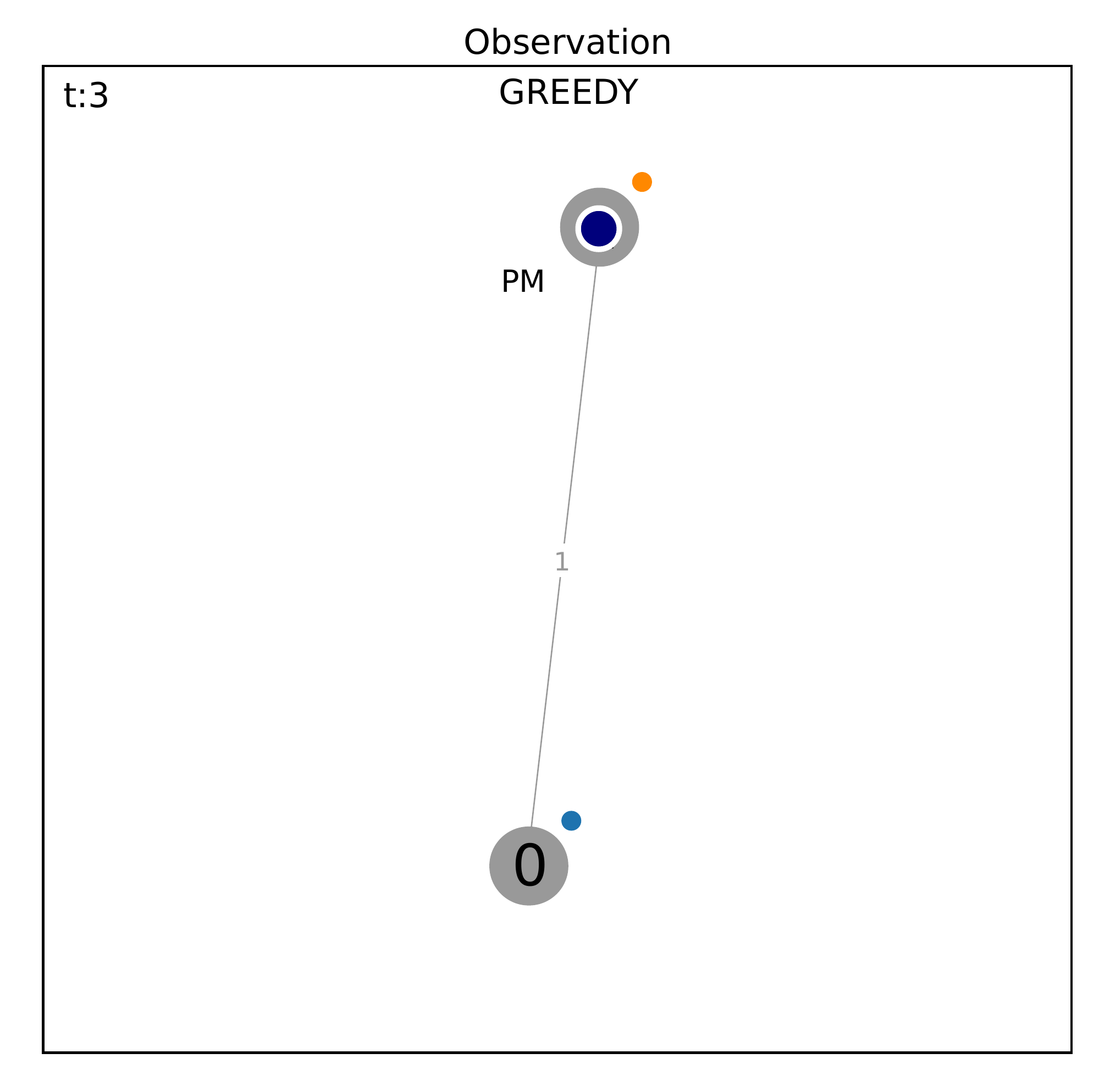}
 \caption{H-Greedy. $J(\pi)$:306.366}
 \label{subfig:pol-greedy}
 \end{subfigure}
 \hfill
 \begin{subfigure}[t]{0.24\textwidth}
 \centering
 \includegraphics[width=\textwidth]{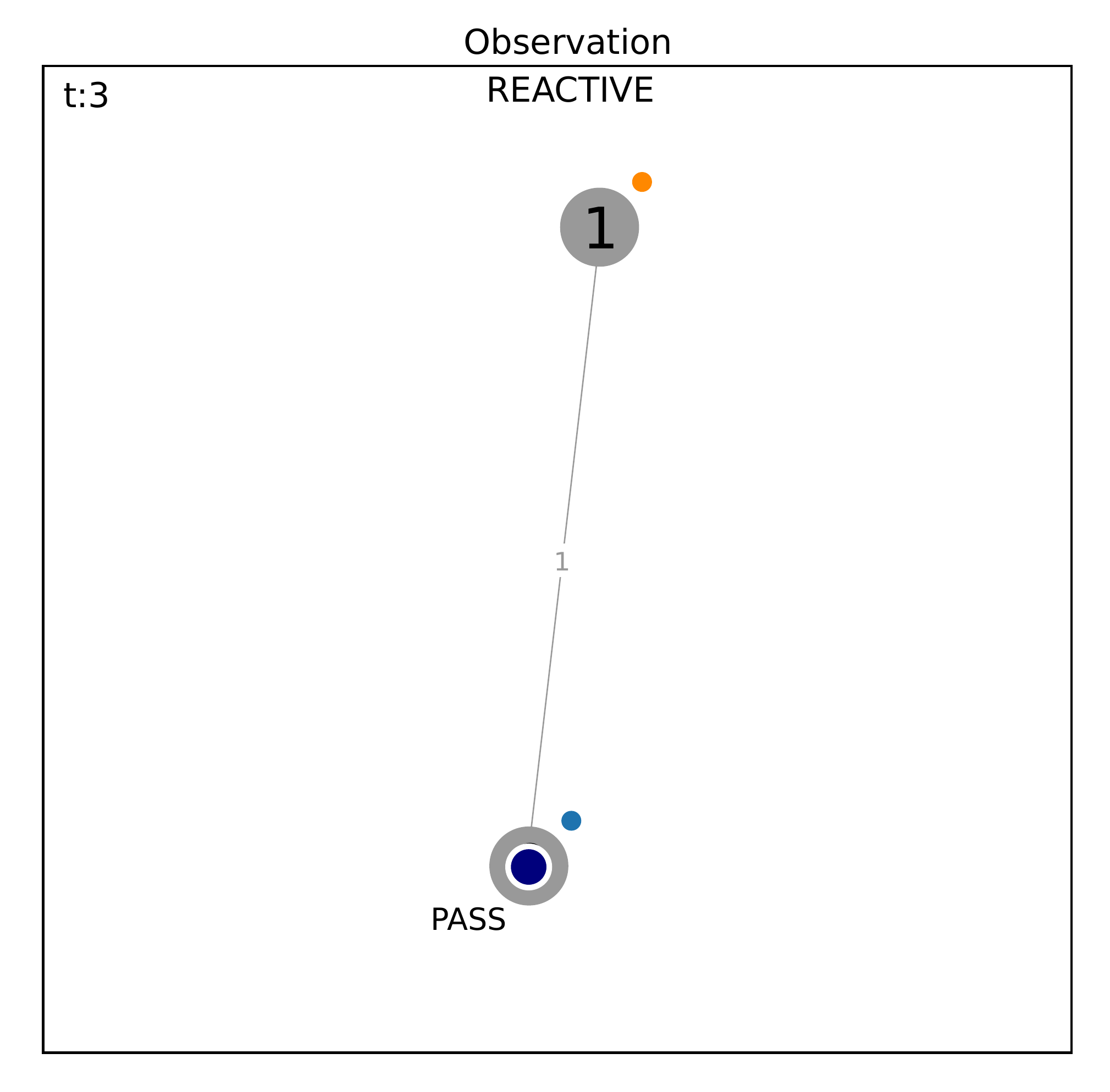}
 \caption{H-Reactive. $J(\pi)$:283.619}
 \label{subfig:pol-reac}
 \end{subfigure}
 \caption{Time step $t=3$ of one episode of M2-Q2Q3C2 (best seen in color) and average performance under each policy. \textit{Gray} nodes in the graph represent the machine locations numbered 0 (Q2) and 1 (Q3). The decision-maker is a \textit{navy blue} dot, and \textsc{pass}, \textsc{move}, \textsc{pm} and \textsc{cm}, correspond to wait, move to another location and preventive/corrective maintenance actions. Alerts are small \textit{orange} dots on top of machine nodes, otherwise \textit{blue} when no alert is present. }
 \label{fig:policy-diff}
\end{figure*}

In Figure \ref{subfig:pol-da}, we observe that the TMH makes a different decision than the greedy and reactive heuristic. Like the greedy heuristic, the TMH policy travels to location $1$ when the alert arrives. However, it decides to wait (\textsc{pass}) and postpone maintenance as it favors corrective maintenance under cost structure C2. Identical to the reactive heuristic, the TMH performs a CM action at $t=6$ after the machine has failed. In Figure \ref{subfig:pol-dqn}, we observe that given the same hidden network state, the DRL policy prescribes to remain at location 0 until $t=4$, implying that the agent has learned that the machine at location 1 needs \emph{at least} $3$ time steps to transition to the failed network state. Thus, the DRL agent chooses a less proactive behavior than the TMH while also postponing maintenance. In fact, at $t=4$, the DRL agent moves to location 0 and at $t=6$, the agent performs a CM action.

Until $t=6$, both the TMH and the DRL policies incur the same cumulative costs and at $t=12$, there are two alerts present in the network. Since both policies resulted in repairing the machine at location 0 at the same time, they observe identical alerts. At $t=12$, both engineers are located at machine 0; however, the DRL agent travels to location 1, while the TMH waits until a machine fails. The DRL agent proceeds to perform PM at location 1 at $t=13$, while at $t=14$ the TMH observes a failure at location 1, moves to that location and performs CM at $t=15$. 

On average, after receiving an alert, the machine at location 1 (Q3) fails more frequently than the one at location 0 (Q2). At $t=12$, the DRL agent prefers to travel to location 1 to avoid a failure while leaving the alert at location 0 unresolved. On the other hand, the TMH fails to balance the importance of the failure time distributions in this instance. Note that, at $t=15$ and afterwards, it prefers to postpone travel and preventive maintenance actions while waiting at location 0, resulting in higher costs as it rushes to repair failed machines at both locations.
For the remaining experiments, we observe that the DRL policies can manage the trade-off of costs and prioritize machines more efficiently than the other heuristics. The complete list of experiments comparing the proposed heuristics acting on the same environment is available online\footnote{\url{http://retrospectiverotations.com/dtmpa/dtmpa.html}}. 

\subsection{Managerial insights}
For larger networks, characterizing the behavior of the learned policies via a set of heuristic rules becomes challenging as such heuristics depend on more complex rules covering more machines, alerts and elapsed times. In general, we observe that the DRL agent moves proactively to healthy assets which carry high economic risk and learns to delay preventive maintenance adequately, sometimes incurring several extra downtime penalties. This implies that proactive handling on risky assets in a complex and uncertain maintenance environment pays off. Furthermore, it is not always optimal to react on every alert in the network. It is important to understand the behaviour of each machine after an issued alert and to estimate the near-future consequences.

For large asset networks ($|\mathcal{M}| = 6$), we observe that in one instance the DRL agent leaves assets with frequent failures (in terms of the distribution of $T^\textbf{\textrm{f}}_m$) unattended. This can be explained by the single engineer reaching its capacity limit. Thus, depending on the network size and degradation processes, a single engineer may not be sufficient to maintain the entire network without sacrificing reliability. 

Managers can employ such insights when determining the size of repair crews, considering the number of assets and their degradation processes. There is a cap on the number of assets that one engineer can maintain effectively. 

Each of the proposed methods in this work reflects on different assumptions regarding the observed information from the network and relates to different application scenarios: In case no historical information about the degradation of assets or the operational lifetimes is available, greedy or reactive policies can be easily implemented. If methods for estimating the remaining useful lifetime of assets exist, heuristic policies like the TMH can be employed to take into account the trade-off of costs of resolving the alerts in the network. When online observation is possible, or a simulation environment exists, policies can be learned via the DRL approach proposed in this work. In this case, an online system can take observations as inputs and generate instructions for a repair crew. As a result, the learned policies can inspire further heuristic developments.
\section{Conclusion and discussion}\label{sec: conclusion}
In this work, we introduce the dynamic traveling maintainer problem with alerts (DTMPA) for a network of modern industrial assets with stochastic failure times. We consider a realistic scenario where assets are part of an asset network and where alerts, triggered by real-time degradation monitoring devices, are utilized to make maintenance decisions. We propose a generic framework to this problem, where independent degradation processes model the alert as an early and imperfect indicator of failure. Therefore, the decision-maker only has access to partial degradation information to decide on cost-effective maintenance and travel actions.

To solve the problem, we propose three approaches, each requiring different information levels from the alerts. When alerts come with partial distributional information about the residual lifetime, we propose a class of greedy and reactive heuristics that rank alerts based on travel times, failure times or economical value. When full distributional lifetime information is available, we propose approximating the environment with a problem instance similar to the traveling maintainer problem. This method considers the alerts to determine a schedule that minimizes the near future maintenance costs. When alerts carry only information about the time since their generation and a simulation environment is present, we can learn policies via deep reinforcement learning aimed at approximating the state-action value distributions of long-term costs.

The results show that we can effectively replicate the performance of optimal policies when no information gap exists between alerts and the real underlying degradation. When an information gap is present and the number of assets in the network is small (up to four assets), the proposed methods yield effective policies that are close to the optimal policies under full information. When the optimal policy becomes computationally intractable, the learned policies that result in the lowest total expected discounted cost balance the failure risk and the maintenance costs better than the competing methods.

Future research directions include expanding the proposed framework to consider multi-component assets and asset networks serviced by multiple maintainers. The numerical study considers unit distances and repair times, which is a limitation that would be important to address in further work. The assumption of geometrically distributed degradation transition times in the experiments underlies the ability to solve to optimality some of the instances; this assumption can be relaxed within the more generic DTMPA framework, which we leave for future work.

\small
\paragraph{Acknowledgements} We would like to thank Verus Pronk and Jan Korst for the discussions that formed the basis for this paper. This work was partially carried out on the Dutch national e-infrastructure with the support of SURF Cooperative. This work was supported by the Netherlands Organisation for Scientific Research (NWO). Project: NWO Big data - Real Time ICT for Logistics. Number: 628.009.012. The work of Stella Kapodistria is supported by NWO through the Gravitation-grant NETWORKS-024.002.003. 
\bibliographystyle{elsarticle-harv}
\bibliography{references.bib}
\pagebreak
\section*{Supplementary material}
\beginsupplement
\appendix
\section{Theorems and definitions}\label{app:theorems}

\begin{definition}
For $m\in\mathcal{M}$, let $\{\tilde{X}_m(t),\ t \geq 0\}$ denote the \textit{(observed degradation) phase process} and let $\{N_m(t),\ t \geq 0\}$ denote a \textit{counting process} counting the cycles of machine installments (renewals defined by an as-good-as-new asset), namely $ N_m(t) = \sum_{s=0}^{t}\mathds{1}_{\{ \tilde{X}_m(s) = \tilde{x}^\textbf{\textrm{h}}_m \land \tilde{t}_m(s) = 0\}}.$ Here, $\tilde{t}_m(s)$ is the elapsed time since the last phase transition of machine $m$.
\end{definition}

\begin{definition}
For $m\in\mathcal{M}$, let $S^n_m = \underset{t\geq0}{\textrm{inf}}\{t: N_m(t) = n\}$ be the time until the start of the $n$-th cycle of machine $m$, where $n \geq 1$.
\end{definition}

The elapsed time since the last phase transition of machine $m$ can now be defined as 
$$\tilde{t}_m(t) = \underset{s\in\{0,\ldots,t-S^{N_m(t)}_m\} }{\textrm{sup}} \{s:\tilde{X}_m(t-s) = \tilde{X}_m(t)\}.$$
\begin{theorem}[Discounted cost of a policy $\pi^\textbf{L}$]\label{th: tmh}
Let $\gamma \in [0,1]$ be the discounting factor and $\tau \in \mathbb{N}$ be a maintenance deadline. The total expected discounted cost of the induced time-based maintenance policy $\pi^\textbf{L}(\tau)$, denoted by $J(\pi^\textbf{L}(\tau))$, satisfies
\begin{equation*}\label{}
J(\pi^{\textbf{L}}(\tau)) = \frac{ \bar{c}_m^{\textrm{CM}}\mathds{E}[\gamma^{T^\textbf{\textrm{a}}_m+T^\textbf{\textrm{f}}_m}\mathds{1}_{\{ T^\textbf{\textrm{f}}_m \leq \tau \}} \mid X_m(0) = x^\textbf{\textrm{h}}_m] + \bar{c}_m^{\textrm{PM}}\mathds{E}[\gamma^{T^\textbf{\textrm{a}}_m + \tau}\mathds{1}_{\{ T^\textbf{\textrm{f}}_m > \tau \}} \mid X_m(0) = x^\textrm{h}_m] }{ 1 -\gamma^{t^\textrm{CM}_m} \mathds{E}[\gamma^{T^\textbf{\textrm{a}}_m+T^\textbf{\textrm{f}}_m}\mathds{1}_{\{ T^\textbf{\textrm{f}}_m \leq \tau \}} \mid X_m(0) = x^\textrm{h}_m] - \gamma^{t^\textrm{PM}_m}\mathds{E}[\gamma^{T^\textbf{\textrm{a}}_m + \tau}\mathds{1}_{\{ T^\textbf{\textrm{f}}_m > \tau \}} \mid X_m(0) = x^\textrm{h}_m] },
\end{equation*}
where $\Bar{c}_m^{\textrm{CM}} = c_m^{\textrm{CM}}+ c_m^{\textrm{DT}}\frac{\gamma^{t_m^{\textrm{CM}}} - 1 }{\gamma-1} $ and $\Bar{c}_m^{\textrm{PM}} = c_m^{\textrm{PM}}+c_m^{\textrm{DT}}\frac{\gamma^{t_m^{\textrm{PM}}} - 1 }{\gamma-1}$.
\end{theorem}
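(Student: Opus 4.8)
The plan is to exploit the regenerative structure induced by the as-good-as-new restorations. First I would observe that, under the $\tau$-TBM policy $\pi^{\textbf{L}}(\tau)$ with response time $0$, the observed phase process of machine $m$ regenerates at every renewal epoch (an as-good-as-new instant, as recorded by the counting process $N_m$ and the times $S^n_m$). Between two consecutive renewals the evolution is governed solely by the random variables $T^\textbf{\textrm{a}}_m$ and $T^\textbf{\textrm{f}}_m$, whose laws do not depend on the cycle index, so the successive cycles are i.i.d. Setting time $0$ at a renewal epoch, I would split a cycle into the two mutually exclusive events $\{T^\textbf{\textrm{f}}_m \le \tau\}$ (the machine fails before the deadline, triggering CM) and $\{T^\textbf{\textrm{f}}_m > \tau\}$ (PM is carried out $\tau$ periods after the alert).

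Second, I would compute the expected discounted cost accrued over a single cycle, discounted back to its starting renewal epoch. On $\{T^\textbf{\textrm{f}}_m \le \tau\}$ the failure, and hence the CM, occurs at time $T^\textbf{\textrm{a}}_m + T^\textbf{\textrm{f}}_m$; the cost $c^\textrm{CM}_m$ is paid then, and a downtime cost $c^\textrm{DT}_m$ is paid in each of the $t^\textrm{CM}_m$ repair periods, i.e. at relative times $0,\ldots,t^\textrm{CM}_m-1$. Summing the geometric series $\sum_{i=1}^{t^\textrm{CM}_m}\gamma^{i-1} = (\gamma^{t^\textrm{CM}_m}-1)/(\gamma-1)$ collapses these contributions into the combined cost $\bar{c}^\textrm{CM}_m$, so this event contributes $\gamma^{T^\textbf{\textrm{a}}_m+T^\textbf{\textrm{f}}_m}\bar{c}^\textrm{CM}_m$. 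Analogously, on $\{T^\textbf{\textrm{f}}_m > \tau\}$ the PM occurs at time $T^\textbf{\textrm{a}}_m + \tau$ and contributes $\gamma^{T^\textbf{\textrm{a}}_m+\tau}\bar{c}^\textrm{PM}_m$. Taking expectations conditional on $X_m(0)=x^\textbf{\textrm{h}}_m$ and adding the two indicator-weighted terms yields exactly the numerator of the claimed expression; call it $g$.

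Third, I would compute the expected discount factor accumulated over one full cycle, $\mathds{E}[\gamma^{L}]$, where $L$ is the cycle length. The cycle ends when maintenance completes, i.e. after an extra $t^\textrm{CM}_m$ periods on the CM event and $t^\textrm{PM}_m$ periods on the PM event. Pulling these deterministic durations out as factors $\gamma^{t^\textrm{CM}_m}$ and $\gamma^{t^\textrm{PM}_m}$ gives $\mathds{E}[\gamma^{L}] = \gamma^{t^\textrm{CM}_m}\mathds{E}[\gamma^{T^\textbf{\textrm{a}}_m+T^\textbf{\textrm{f}}_m}\mathds{1}_{\{T^\textbf{\textrm{f}}_m\le\tau\}}] + \gamma^{t^\textrm{PM}_m}\mathds{E}[\gamma^{T^\textbf{\textrm{a}}_m+\tau}\mathds{1}_{\{T^\textbf{\textrm{f}}_m>\tau\}}]$, which is precisely $1$ minus the stated denominator. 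Since the cycles are i.i.d. and the second cycle begins at the random epoch $L$, the total cost obeys the one-line renewal equation $J(\pi^{\textbf{L}}(\tau)) = g + \mathds{E}[\gamma^{L}]\,J(\pi^{\textbf{L}}(\tau))$; solving for $J$ gives $g/(1-\mathds{E}[\gamma^{L}])$, the stated formula.

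The main obstacle is the justification of the renewal decomposition itself: I would need to argue that the regenerative (strong Markov) property legitimately factors the discounted cost of the trajectory tail as $\gamma^{L}$ times an independent copy of $J$, and that the resulting geometric series converges. Convergence requires $\mathds{E}[\gamma^{L}] < 1$, which holds for $\gamma \in [0,1)$ because $L \ge 1$ almost surely; the boundary $\gamma = 1$ is degenerate here (the ratios $(\gamma^{t}-1)/(\gamma-1)$ are read as their limit $t$, and the undiscounted average-cost version is recovered only through the separate $\gamma \to 1$ limit noted in the footnote). The remaining steps — the two geometric-sum collapses and the conditioning on $X_m(0)=x^\textbf{\textrm{h}}_m$ — are routine bookkeeping.
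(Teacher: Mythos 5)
Your proposal is correct and follows essentially the same route as the paper: both identify the $\tau$-TBM policy as inducing a renewal reward process, condition on the events $\{T^\textbf{\textrm{f}}_m \leq \tau\}$ and $\{T^\textbf{\textrm{f}}_m > \tau\}$ to obtain the one-cycle renewal equation $J = g + \mathds{E}[\gamma^{L}]J$, and solve for $J$. Your write-up is in fact slightly more explicit than the paper's (spelling out the geometric-sum origin of $\bar{c}^{\textrm{CM}}_m$, $\bar{c}^{\textrm{PM}}_m$ and the convergence condition $\mathds{E}[\gamma^L]<1$), but the underlying argument is identical.
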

\begin{proof}
The maintenance deadline induces a renewal reward process (cf. e.g, \citep[Ch. 7]{ross2014introduction}) where each new cycle starts after a maintenance period. Conditioning on the event $\{T^\textbf{\textrm{f}}_m \leq \tau\}$ or $\{T^\textbf{\textrm{f}}_m > \tau\}$, we find 
\begin{align*}
 J(\pi^{\textbf{L}}(\tau)) &=
 \mathds{E}[\gamma^{T^\textbf{\textrm{a}}_m+\textrm{min}\{T^\textbf{\textrm{f}}_m,\tau\}} ((\bar{c}_m^{\textrm{CM}} + \gamma^{t^\textrm{CM}_m}J(\pi^{\textbf{L}} )) \mathds{1}_{\{ T^\textbf{\textrm{f}}_m \leq \tau \}} + (\bar{c}_m^{\textrm{PM}} + \gamma^{t^\textrm{PM}_m}J(\pi^{\textbf{L}} ) )\mathds{1}_{\{ T^\textbf{\textrm{f}}_m > \tau \}}) \mid X_m(0) = x^\textrm{h}_m] \\
 &=\mathds{E}[\gamma^{T^\textbf{\textrm{a}}_m+T^\textbf{\textrm{f}}_m}(\bar{c}_m^{\textrm{CM}} + \gamma^{t^\textrm{CM}_m}J(\pi^{\textbf{L}} )) \mathds{1}_{\{ T^\textbf{\textrm{f}}_m \leq \tau \}} + \gamma^{T^\textbf{\textrm{a}}_m + \tau}(\bar{c}_m^{\textrm{PM}} + \gamma^{t^\textrm{PM}_m}J(\pi^{\textbf{L}} ) )\mathds{1}_{\{ T^\textbf{\textrm{f}}_m > \tau \}} \mid X_m(0) = x^\textrm{h}_m].
\end{align*}
Using the linearity of expectation and sorting the terms yields
\begin{align*}
 J(\pi^{\textbf{L}}(\tau))\left(1 -\gamma^{t^\textrm{CM}_m} \mathds{E}[\gamma^{T^\textbf{\textrm{a}}_m+T^\textbf{\textrm{f}}_m}\mathds{1}_{\{ T^\textbf{\textrm{f}}_m \leq \tau \}} \mid X_m(0) = x^\textrm{h}_m] - \gamma^{t^\textrm{PM}_m}\mathds{E}[\gamma^{T^\textbf{\textrm{a}}_m + \tau}\mathds{1}_{\{ T^\textbf{\textrm{f}}_m > \tau \}} \mid X_m(0) = x^\textrm{h}_m] \right) \\ = \bar{c}_m^{\textrm{CM}}\mathds{E}[\gamma^{T^\textbf{\textrm{a}}_m+T^\textbf{\textrm{f}}_m}\mathds{1}_{\{ T^\textbf{\textrm{f}}_m \leq \tau \}} \mid X_m(0) = x^\textrm{h}_m] + \bar{c}_m^{\textrm{PM}}\mathds{E}[\gamma^{T^\textbf{\textrm{a}}_m + \tau}\mathds{1}_{\{ T^\textbf{\textrm{f}}_m > \tau \}} \mid X_m(0) = x^\textrm{h}_m].
\end{align*}
Isolating $J(\pi^{\textbf{L}}(\tau))$ on the l.h.s. proves the result.\end{proof}

\newpage

\section{Traveling maintainer heuristic algorithms}\label{app:tmhalgos}

\begin{algorithm}[h]
\caption{Traveling maintainer heuristic}
\scriptsize
\begin{algorithmic}[1]
\Procedure{Construct initial schedule}{}\\
\textbf{Input: $\sigma \in \Sigma_{\mathcal{M}_t}$}
\State{Set $t^\sigma(1) = t$} \Comment{Current time}
\State{Set $t^\sigma(2) = t^\sigma(1)+\theta_{\ell(t),\sigma(1)}$} \Comment{Travel to first machine}
\State{Set $j = 3$}
\For{$i=1,\ldots,|\mathcal{M}_t|-1$}\Comment{Construct greedy schedule}
\State Set $t^\sigma(j) = t^\sigma(j-1) + \mathds{1}\left\{ t^\sigma(j-1) < \tilde{T}^\textbf{\textrm{f}}_{\sigma(i), N_{\sigma(i)}(t)} \right\}t^\textrm{PM}_{\sigma(i)} + \mathds{1}\left\{ t^\sigma(j-1) \geq \tilde{T}^\textbf{\textrm{f}}_{\sigma(i), N_{\sigma(i)}(t)} \right\}t^\textrm{CM}_{\sigma(i)} $ \Comment{Schedule PM or CM}
\State j = j+1
\State Set $t^\sigma(j) = t^\sigma(j-1) + \theta_{\sigma(i),\sigma(i+1)}$ 
\State j = j+1
\EndFor
\State Set $t^\sigma(j) = t^\sigma(j-1) + \mathds{1}\left\{ t^\sigma(j-1)
< \tilde{T}^\textbf{\textrm{f}}_{\sigma(|\mathcal{M}_t|), N_{\sigma(|\mathcal{M}_t|)}(t)} \right\}t^\textrm{PM}_{\sigma(|\mathcal{M}_t|)} + \mathds{1}\left\{ t^\sigma(j-1) \geq \tilde{T}^\textbf{\textrm{f}}_{\sigma(|\mathcal{M}_t|), N_{\sigma(|\mathcal{M}_t|)}(t)} \right\}t^\textrm{CM}_{\sigma(|\mathcal{M}_t|)} $\Comment{End time of last maintenance action}\\
\textbf{Output: $ \textbf{t}^\sigma = \left\{ t^\sigma(j) \mid j = 1,\ldots, 2|\mathcal{M}_t|+1 \right\}$}
\EndProcedure
\end{algorithmic}
\label{alg: tmh-1}
\end{algorithm}

\begin{algorithm}[htp]
\scriptsize
\caption{Traveling maintainer heuristic}
\begin{algorithmic}[1]
\Procedure{Optimize a schedule}{}\\
\textbf{Input: $(\sigma, \textbf{t}^\sigma)$ }
\State Set $\textbf{t}^\sigma_\textrm{opt} = \textbf{t}^\sigma$, $i = |\mathcal{M}_t|$
\State Set $\delta = \textbf{t}^\sigma_\textrm{opt}(2|\mathcal{M}_t|) - \tilde{T}^\textbf{\textrm{f}}_{\sigma(|\mathcal{M}_t|)} $
\Comment{Possible delay last repair}
\While{$\delta > 0 \land i > 0$}
\State Set $\textbf{t}^\sigma_\textrm{opt}(2i) = \textbf{t}^\sigma_\textrm{opt}(2i) + \delta$ \Comment{Delay repair}
\State Set $\textbf{t}^\sigma_\textrm{opt}(2i+1) = \textbf{t}^\sigma_\textrm{opt}(2i+1) + \delta$ \Comment{Delay next travel}
\State Set $i=i-1$
\State Set $\delta = \textrm{min}\left\{ \delta, \textbf{t}^\sigma_\textrm{opt}(2i)) - \tilde{T}^\textbf{\textrm{f}}_{\sigma(i)} \right\}$ \Comment{Possible delay previous repair}
\EndWhile
\State Compute $c(\sigma, \textbf{t}^\sigma_\textrm{opt}) = \sum_{s=t}^{\textbf{t}^\sigma_\textrm{opt}(2|\mathcal{M}_t|+1)} \gamma^s C_{\pi_s^\textbf{L}(\sigma, \textbf{t}^\sigma_\textrm{opt})}(h_s) $\\
\textbf{Output: } $(\sigma, \textbf{t}^\sigma_\textrm{opt}, c(\sigma, \textbf{t}^\sigma_\textrm{opt}))$ 
\EndProcedure
\end{algorithmic}
\label{alg: tmh-2}
\end{algorithm}

\newpage

\section{$n$-step quantile regression double Q-Learning algorithm}
\label{app:rlalgo}

 \begin{algorithm}[h]
\scriptsize
\begin{algorithmic}[1]
\State Initialize replay memory $\mathcal{D}$, $n$, $E$, $N_B$, $T$, $N$, $\alpha$, $P$, $\lambda$, $\epsilon$.
\State Initialize functions $\psi_w$, $\psi_{\bar{w}}$ with random weights $w = \bar{w}$ 
\For{episode $=1,\ldots, E$} 
\State Sample initial observable state $o_0$
\For {$t=0,\ldots, T-1$}

	\State Select a random action $u_t$ with probability $\epsilon$
	\State otherwise select $u_t = \arg \min_{u}\frac{1}{N}\sum^N_{i=1} \psi^i_{w}(o_t, u).$
	\State Execute $u_t$, observe $c_t$ and $o_{t+1}$

	\State Store transition $\left(o_t,u_t,c_t,o_{t+1}\right)$ in $\mathcal{D}$
	\If {$t \geq n$}
	\State Sample
	$\Big\{\left(o_j,u_j,C^{j:j+n},o_{j+n}\right)\Big\}^{N_B}_{j=1}$ from $\mathcal{D}$

	\State
	$y^{i^\prime}_{j} =
 \left\{
 \begin{array}{l l}
 C^{j:j+n^\prime} & \text{if } n^\prime \leq n \text{ and } o_{j+n^\prime} \text{ terminal} \\
 C^{j:j+n} + \gamma^n \psi^{i^\prime}_{\bar{w}}(o_{j+n}, u^*) & \text{otherwise}
 \end{array} \right.$
 \State where $u^* = \arg \min_{u^\prime} \sum^N_{i=1} \psi^i_{w}(o_{j+n}, u^\prime)$. 
	\State Take a gradient step acc. to Eq.(\ref{eq:qrddqn-nstep-loss}) w.r.t. $w$

	\EndIf
\EndFor
	\State Every $P$ episodes, update $\bar{w} = w$
\EndFor
\end{algorithmic}
\caption{ $n$-step quantile regression double Q-Learning ($n$QR-DDQN)}
\label{alg:dqn}
\end{algorithm}

\newpage

\section{Nomenclature} \label{app:nomenc}
\begin{table*}[h!]
 \begin{framed}
 \printnomenclature
 \nomenclature[01]{$t$}{Time step}
 \nomenclature[02]{$\mathcal{M}$}{Set of machines}
 \nomenclature[03]{$M$}{Number of machines}
 \nomenclature[04]{$m$}{Machine index}
 \nomenclature[05]{$c^\textrm{PM}_m$}{PM cost of machine $m$}
 \nomenclature[06]{$c^\textrm {CM}_m$}{CM cost of machine $m$} 
 \nomenclature[07]{$c^\textrm{DT}_m$}{Downtime cost of machine $m$} 
 \nomenclature[08]{$\theta_{ij}$}{Travel time between machines $i$ and $j$}
 \nomenclature[09]{$x^\textbf{h}_m$}{As-good-as-new state}
 \nomenclature[10]{$x^\textbf{\textbf{a}}_m$}{Alert state}
 \nomenclature[11]{$x^\textbf{\textbf{f}}_m$}{Failed state}
 \nomenclature[12]{$\mathcal{N}_m$}{State space of machine $m$}
 \nomenclature[13]{$x_m(t)$}{Degradation process of machine $m$} 
 \nomenclature[14]{$\tilde{x}^\textbf{h}_m$}{Healthy phase}
 \nomenclature[15]{$\tilde{x}^\textbf{\textbf{a}}_m$}{Alert phase}
 \nomenclature[16]{$\tilde{x}^\textbf{\textbf{f}}_m$}{Failed phase}
 \nomenclature[17]{$\mathcal{X}_m$}{Observation space of machine $m$}
 \nomenclature[18]{$T^{x_m}_m$}{Random transition time from state $x_m$ to $x_m+1$}
 \nomenclature[19]{$T^\textbf{\textrm{a}}_m$}{Time until the alert}
\nomenclature[20]{$T^\textbf{\textrm{f}}_m$}{Residual lifetime}
\nomenclature[21]{$h$}{Network state}
\nomenclature[22]{$o$}{Observed network state} 
\nomenclature[23]{$\Omega$}{Observation space} 
\nomenclature[24]{$\textbf{L}_i$}{Information level $i$}
\nomenclature[25]{$\mathcal{U}(h)$}{State-dependent action set}
\nomenclature[26]{$\pi^{\textbf{L}_i}$}{Policy under information level $i$}
 \end{framed}
\end{table*}

\newpage

\section{Deep reinforcement learning hyperparameters}
\label{app:drlparam}
During training of $n$QR-DDQN we employ a number of hyperparameters listed in Table \ref{tab:hyper}. 

\begin{table}[htp]
\centering
\caption{$n$QR-DDQN Hyperparameters}
\resizebox{0.80\textwidth}{!}{%
\begin{tabular}{c|l|c}
\toprule
Hyperparameter & Description & Value \\ \midrule
$T$ & length of an episode & 500 \\
$|\mathcal{D}|$ & size of the replay memory & 100000 \\
$E$ & number of episodes & 2000 \\
$P$ & number of episodes to update target NN & 30 \\
$N_B$ & mini-batch size & 32 \\
$\gamma$ & discount rate & 0.99 \\
$e_r$ & exploration fraction & 0.90 \\
$\epsilon_i$ & initial exploration probability (linear decay over $Ee_r$ episodes) & 0.1 \\
$\epsilon_f$ & final exploration probability & 0.005 \\
$\lambda$ & learning rate & $5 \times 10^{-4}$ \\
$n$ & steps to bootstrap in the $n$-step loss & 5 \\
$\alpha$ & $n$-step loss weight & 1 \\
$\kappa$ & Huber loss weight & 1 \\
$L$ & number of NN layers & 4 \\
$N$ & number of quantiles in QR & 51 \\
$M$ & number of machines in a network & $\{1, 2, 4, 6 \}$ \\
$d^l$ & hidden dimension of layer $l=1, \ldots, L-1$ & $64$ \\
$\sigma^1(x)$ & linear embedding layer $l=1$ & $x$ \\
$\sigma^l(x)$ & activation of hidden layer $l=2, \ldots, L-1$ & $\max(0, x)$ \\
 \bottomrule
\end{tabular}%
}
\label{tab:hyper}
\end{table}

\end{document}